%
%
%


%
%
%


\documentclass[12pt, reqno, a4paper]{amsart}



\usepackage{ amssymb, amsmath, enumerate, amsfonts, amsthm, mathrsfs, url, bm, mathtools}

\setlength{\parindent}{1em}

\usepackage{xcolor}  	
\usepackage{hyperref}
\hypersetup{
colorlinks,
   linkcolor={cyan!80!black},
   citecolor={cyan!80!black},
 urlcolor={cyan!80!black}
}

\usepackage{color}

\usepackage[margin=1in]{geometry}

\RequirePackage{doi}

\usepackage{amscd}
\usepackage{amsfonts}
\usepackage{float}
\usepackage{enumitem}
\usepackage{color}
\usepackage[
backend=biber,
style=alphabetic,
]{biblatex}
\usepackage{bookmark}

\renewbibmacro{in:}{}
\DeclareFieldFormat{title}{#1}

\DeclareFieldFormat[article]{title}{\mkbibemph{#1}}       
\DeclareFieldFormat[incollection]{title}{\mkbibemph{#1}}  
\DeclareFieldFormat[book]{title}{\mkbibemph{#1}}          
\DeclareFieldFormat[incollection]{booktitle}{#1}          
\DeclareFieldFormat[article]{journaltitle}{#1}            

\AtEveryBibitem{%
  \ifentrytype{misc}{\DeclareFieldFormat{title}{\mkbibemph{#1}}}{}}

\DeclareFieldFormat{eprint:eprint}{arXiv:\href{https://arxiv.org/abs/#1}{#1}}

\DeclareFieldFormat[inproceedings]{title}{\mkbibemph{#1}}

\DeclareFieldFormat[inproceedings]{booktitle}{#1}

\addbibresource{ref.bib}

\usepackage{amssymb}




\newtheorem{theorem}{Theorem}[section]
\newtheorem{lemma}{Lemma}[section]

\newtheorem{corollary}{Corollary}[section]
\newtheorem{proposition}{Proposition}[section]

\theoremstyle{definition}
\newtheorem{definition}{Definition}[section]

\theoremstyle{remark}

\numberwithin{equation}{section}

\newcommand{\Mod}[1]{\ (\mathrm{mod}\ #1)}

\newcommand{\N}{\mathbb{N}}

\newcommand{\Z}{\mathbb{Z}}

\renewcommand{\leq}{\leqslant}
\renewcommand{\geq}{\geqslant}

\renewcommand{\epsilon}{\varepsilon}
\newcommand{\eps}{\varepsilon}

\begin{document}

\title[Multiplicative recurrence of Möbius transformations]{Multiplicative recurrence of Möbius transformations}



\author{Sun-Kai Leung}
\address{D\'epartement de math\'ematiques et de statistique\\
Universit\'e de Montr\'eal\\
CP 6128 succ. Centre-Ville\\
Montr\'eal, QC H3C 3J7\\
Canada}
\curraddr{}
\email{sun.kai.leung@umontreal.ca}
\thanks{}

\author{Christian T\'afula}
\address{D\'epartement de math\'ematiques et de statistique\\
Universit\'e de Montr\'eal\\
CP 6128 succ. Centre-Ville\\
Montr\'eal, QC H3C 3J7\\
Canada}
\curraddr{}
\email{christian.tafula.santos@umontreal.ca}
\thanks{}

\subjclass[2020]{05C15; 05C69; 05D10; 11B30; 37A44}

\date{}

\dedicatory{}

\keywords{}

\begin{abstract}
We establish a complete characterization of multiplicative recurrence for images of the positive integers under Möbius transformations, answering a question of Donoso–Le–Moreira–Sun in the negative. As a consequence, we strengthen and extend a Diophantine approximation result of Charamaras–Mountakis–Tsinas, confirming their conjectures. 
\end{abstract}

\maketitle

\section{Introduction}

Recurrence is a central theme in ergodic Ramsey theory. Resolving a conjecture of Lovász, 
Sárközy \cite{MR466059} and Furstenberg \cite{MR498471} independently proved that the set of squares $\{ n^2 \,:\, n \in \mathbb{N}\}$ is a \textit{set of (additive) recurrence}.\footnote{A set $R \subseteq \mathbb{N}$ is called a \textit{set of (additive) recurrence} if for any subset $A \subseteq \mathbb{N}$ with positive upper asymptotic density 
\begin{align*}
\overline{d}(A):=\limsup_{N \to \infty} \frac{| A \cap \{1,\ldots,N \} |}{N} ,
\end{align*}
the intersection $(A-A) \cap R$ is non-empty.} Sárközy \cite{MR487031} also proved that the set of shifted primes \( \mathbb{P} - 1 \) is recurrent. 
These combinatorial problems are not only interesting in their own
right, but they also carry ergodic significance, thanks to \textit{Furstenberg's correspondence principle} (see \cite[Theorem 2.1]{MR4594405}).

It is natural to consider multiplicative analogues of these questions, a relatively young area first investigated by Bergelson \cite{MR2191223} and further developed by Frantzikinakis and Host \cite{MR3556289} more recently.

\begin{definition}[Set of multiplicative recurrence]
A set \( R \subseteq \mathbb{Q}_{>0} \setminus \{1\} \) is called a \textit{set of (measurable) multiplicative recurrence} if, for any measure-preserving action \( T = (T_n)_{n \in \mathbb{N}} \) of the semigroup \( (\mathbb{N}, \times) \) on a probability space \( (X, \mathcal{B}, \mu) \),\footnote{That is, \( T_{mn} = T_m \circ T_n \) for all \( m, n \in \mathbb{N} \).} and for any Borel set \( B \in \mathcal{B} \) with positive measure \( \mu(B) > 0 \), there exist \( m, n \in \mathbb{N} \) such that \( m/n \in R \) and
\[
\mu(T_m^{-1}B \cap T_n^{-1}B) > 0.\footnote{See \cite[Proposition 2.7]{MR4594405} for equivalent definitions in combinatorial terms.}
\]
\end{definition}

We also recall a strictly weaker notion of multiplicative recurrence.

\begin{definition}[Set of topological multiplicative recurrence]
A set $R \subseteq \mathbb{Q}_{>0} \setminus \{1\}$ is called a \textit{set of topological multiplicative recurrence} if for any finite partition 
\begin{align*}
\mathbb{N}=C_1 \sqcup \cdots \sqcup C_s,
\end{align*}
the intersection $(C_i/C_i) \cap R $ is non-empty for some integer $1\leq i\leq s.$\footnote{See \cite[Proposition 2.13]{MR4594405} for equivalent definitions in ergodic-theoretic terms.} 
\end{definition}

Using suitable equivalent definitions, one can show by Furstenberg's correspondence principle that any set of multiplicative recurrence is also a set of topological multiplicative recurrence (see also \cite[p. 730]{MR4594405}).

In a recent breakthrough toward the conjecture of Erdős and Graham on the partition regularity of Pythagorean triples (see \cite{MR2337049}), Frantzikinakis, Klurman, and Moreira \cite{frantzikinakis2023partitionregularitypythagoreanpairs} proved that for any $\ell, \ell' \in \mathbb{N},$ the sets 
\begin{align*}
\left\{  \frac{\ell(m^2-n^2)}{\ell'mn} \,:\, m,n \in \mathbb{N}, m>n\right\}\setminus \{1\}\quad \text{and} \quad
\left\{  \frac{\ell(m^2+n^2)}{\ell'mn} \,: m,n \in \mathbb{N} \right\} \setminus \{1\}
\end{align*}
are multiplicatively recurrent (see also \cite{frantzikinakis2024partitionregularitygeneralizedpythagorean}, \cite{frantzikinakis2024partitionregularityhomogeneousquadratics}, and \cite{frantzikinakis2025decompositionresultsmultiplicativeactions}). 



In this paper, we are interested in the multiplicative recurrence of images of the positive integers under Möbius transformations. Given \( a, c \in \mathbb{N} \) and \( b, d \in \mathbb{Z} \), define
\[
R_{a,b,c,d} := \left\{ \frac{an + b}{cn + d} \,:\, n \in \mathbb{N} \right\} \cap (\mathbb{Q}_{>0} \setminus \{1\}).\]
Donoso, Le, Moreira, and Sun \cite[Theorem 1.6]{MR4594405} proved that \( R_{a,b,c,d} \) is a set of topological multiplicative recurrence if $a=c$ and either \( a \mid b \) or \( a \mid d \). They further asked whether this condition is not only sufficient but also necessary (see \cite[Question 7.2]{MR4594405}). Our main result gives a complete characterization of the (measurable) multiplicative recurrence of the sets 
\( R_{a,b,c,d} \), thereby answering their question in the negative.

\begin{theorem} \label{thm:main}
Let \( a, c \in \mathbb{N} \) and \( b, d \in \mathbb{Z} \). 
If \( R_{a,b,c,d}\) is non-empty, then the following are equivalent:
\begin{enumerate}[label=\textnormal{(\roman*)}]
\item $R_{a,b,c,d}$ is a set of multiplicative recurrence.
\item $R_{a,b,c,d}$ is a set of topological multiplicative recurrence.
\item \( a = c \) and \( a \mid \mathrm{lcm}(b, d) \).
\end{enumerate}
\end{theorem}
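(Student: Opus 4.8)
The plan is to establish the cycle $(iii) \Rightarrow (i) \Rightarrow (ii) \Rightarrow (iii)$, where only the last implication requires genuinely new work, since $(i) \Rightarrow (ii)$ is immediate from Furstenberg's correspondence principle as noted in the excerpt, and $(iii) \Rightarrow (i)$ should follow by upgrading the Donoso–Le–Moreira–Sun argument: when $a = c$ and $a \mid \mathrm{lcm}(b,d)$, write $\tfrac{an+b}{an+d}$ and reduce — after passing to an arithmetic progression of $n$ — to showing that a set of the shape $\{\tfrac{m+\beta}{m}: m \in S\}$ or $\{\tfrac{m}{m+\delta} : m \in S\}$ for a suitable dilated copy $S$ of $\mathbb{N}$ is a set of (measurable, not merely topological) multiplicative recurrence. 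This in turn reduces, via the standard correspondence, to an additive recurrence statement for $\{\log(1 + \beta/m)\}$-type perturbations; here I would invoke that $\mathbb{P}-1$ or more simply that the integers themselves give additive recurrence along the relevant fibers, using the semigroup structure to split $T_{m+\beta}^{-1}B \cap T_m^{-1}B$ across the prime factorization. The technical care is in handling the finitely many exceptional $n$ and the coprimality conditions so that the quotients genuinely land in $\mathbb{Q}_{>0}\setminus\{1\}$.

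The heart of the paper is $(ii) \Rightarrow (iii)$: assuming $R_{a,b,c,d}$ is topologically multiplicatively recurrent, I must force $a = c$ and $a \mid \mathrm{lcm}(b,d)$. The natural strategy is the contrapositive via an explicit coloring obstruction. If $a \neq c$, then for large $n$ the ratio $\tfrac{an+b}{cn+d}$ is bounded away from $1$ and in fact tends to $a/c \neq 1$; one exploits this by building a finite partition of $\mathbb{N}$ — typically using the values $\lfloor \log\log n \rfloor \bmod s$, or a coloring by the fractional part of $\kappa \log n$ for a well-chosen irrational $\kappa$ tied to $\log(a/c)$ — so that no color class $C_i$ contains a pair $m, m'$ with $m/m' \in R_{a,b,c,d}$. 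The point is that elements of $R_{a,b,c,d}$ cluster multiplicatively near the single value $a/c$ together with a sparse sequence accumulating there, and such a sparse "rigid" target set can be dodged by a logarithmically-scaled coloring — this is the mechanism that makes the DLMS sufficient condition fail to be necessary in the form they guessed, and shows their actual necessary-and-sufficient condition is the divisibility one.

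For the remaining case $a = c$ but $a \nmid \mathrm{lcm}(b,d)$: now $\tfrac{an+b}{an+d} \to 1$, so the rigidity argument above does not directly apply, and one instead argues $p$-adically. Pick a prime power $p^k \mid a$ with $p^k \nmid b$ and $p^k \nmid d$; then analyze the $p$-adic valuation of numerator and denominator of the reduced fraction $\tfrac{an+b}{an+d}$. The claim to prove is that $v_p(\tfrac{an+b}{an+d})$ is constrained — say, it is always nonzero, or always of one sign, or avoids a residue — uniformly in $n$, which lets one construct a multiplicative coloring of $\mathbb{N}$ (coloring $m$ by a suitable function of $v_p(m)$, e.g. $v_p(m) \bmod s$ or the sign pattern of $v_p$ across a few primes) that no color class can violate. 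The main obstacle I anticipate is precisely this $p$-adic bookkeeping: one must show that the divisibility hypothesis $a \mid \mathrm{lcm}(b,d)$ is *exactly* the condition under which $v_p(an+b) - v_p(an+d)$ can realize the value $0$ (or whatever the coloring needs) for infinitely many structured $n$, and conversely its failure pins down a forbidden valuation pattern; assembling the two obstructions (Archimedean for $a\neq c$, $p$-adic for $a=c$) into clean finite partitions, and checking they are disjoint obstructions covering all of case $(iii)$'s failure, is where the bulk of the careful argument lies.
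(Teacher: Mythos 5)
Your outline for the implication $(ii) \Rightarrow (iii)$ is essentially the paper's strategy: produce explicit finite colorings of $\mathbb{N}$ that no color class of $R_{a,b,c,d}$-connected pairs can share. The Archimedean coloring by the fractional part of $t\log n$ when $a \ne c$ is exactly what the paper does (Proposition~\ref{prop:1}), and your $p$-adic valuation coloring matches Proposition~\ref{prop:3}. However, you are missing a genuine case split. When $a = c$ and $a \nmid \mathrm{lcm}(b,d)$, so $v_p(a) > \max\{v_p(b), v_p(d)\}$ for some prime $p$, the sub-case $v_p(b) = v_p(d)$ gives $v_p\bigl(\tfrac{an+b}{an+d}\bigr) = 0$ for all $n$, so a coloring based on $v_p(m) \bmod s$ sees nothing. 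The constraint in this sub-case is instead that the reduced ratio is $\equiv 1 \pmod{p^{k+1}}$ would be violated, i.e.\ $v_p\bigl(\tfrac{an+b}{an+d} - 1\bigr) \le v_p(b-d)$ uniformly; this forces a coloring by a Dirichlet character mod $p^{k+1}$ evaluated on the $p$-free part of $n$ (Proposition~\ref{prop:2}). Only when $v_p(b) \ne v_p(d)$ does the valuation of the ratio itself become nonzero and bounded, making your proposed coloring work.

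The more serious gap is in $(iii) \Rightarrow (i)$, where your approach diverges from the paper and is not viable as sketched. Reducing to an ``additive recurrence statement for $\{\log(1 + \beta/m)\}$-type perturbations'' and then invoking $\mathbb{P} - 1$ or the integers does not make sense: multiplicative recurrence for the semigroup $(\mathbb{N}, \times)$ is not the image under $\log$ of any additive recurrence statement, and splitting $T_{m+\beta}^{-1}B \cap T_m^{-1}B$ ``across the prime factorization'' has no content since $m$ and $m+\beta$ do not share a prime factorization structure. What the paper actually does is prove the far stronger combinatorial statement that the associated graph $G_{a,b,a,d}$ has arbitrarily large cliques ($\omega(G_{a,b,a,d}) = \infty$, Theorem~\ref{thm:suff}): first a number-theoretic reduction (Lemma~\ref{lem:reduction}) brings the problem to the canonical form $a = B(B-1)$, $b = B$, $d = B-1$; then an explicit induction (Proposition~\ref{prop:a=b(b-1)}) builds a $(k+1)$-clique out of a $k$-clique using carefully chosen moduli and a factorial to clear denominators. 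Large cliques give measurable recurrence by an elementary Bonferroni/pigeonhole argument (Lemma~\ref{lem:moreira}) with no ergodic machinery beyond measure preservation. Without this clique construction, your outline does not reach $(i)$, and it is unlikely any patching of the $\log$-perturbation idea would.
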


For instance, the theorem confirms that 
\[
R_{6,3,6,2} := \left\{ \frac{6n + 3}{6n + 2} \,:\, n \in \mathbb{N} \right\} 
\]
is a set of multiplicative recurrence, which is one of the counterexamples to their question
 (see \cite[p.~759]{MR4594405} and \cite[p.~1]{charamaras2024multiplicativerecurrencelinearpatterns}). 

Recently, 
Charamaras, Mountakis, and Tsinas \cite[Theorem 1.1]{charamaras2024multiplicativerecurrencelinearpatterns} generalized 
the Diophantine approximation result of Klurman and Mangerel \cite[Theorem~1.1]{MR3856825} that
\begin{align*}
\liminf_{n \to \infty} |f(n+1)-f(n)|=0
\end{align*}
for any completely multiplicative function $f: \mathbb{N} \to \mathbb{S}^1$ (see \cite[Corollary 1.7]{MR4594405} for the previous generalization). As a consequence of the proof of Theorem \ref{thm:main}, we strengthen and extend their generalized result as follows.



\begin{theorem}
\label{thm:diophantine}
Let \( r, a, c  \in \mathbb{N} \) and \( b, d \in \mathbb{Z} \). Then the set
\begin{align*}
\mathcal{A}_{a,b,c,d}(f_1,\ldots,f_r;\epsilon):=\{ n \in \mathbb{N} \,:\, |f_j(an+b)-f_j(cn+d)| < \epsilon \quad \forall \, 1 \leq j \leq r\}
\end{align*}
has positive lower asymptotic density\footnote{It is effectively computable.} for any $\epsilon>0$ and any completely multiplicative functions \( f_1, \ldots, f_r : \mathbb{N} \to \mathbb{S}^1 \)
if and only if \( a = c \) and \( a \mid \mathrm{lcm}(b, d) \).
\end{theorem}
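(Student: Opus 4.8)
\textbf{Proof proposal for Theorem \ref{thm:diophantine}.}

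The plan is to deduce both directions from Theorem \ref{thm:main} together with a quantitative refinement of its proof, routing everything through the correspondence between completely multiplicative functions into $\mathbb{S}^1$ and measure-preserving actions of $(\mathbb{N},\times)$. For the ``if'' direction, assume $a=c$ and $a\mid\mathrm{lcm}(b,d)$. One wants not merely nonemptiness of $(C_i/C_i)\cap R_{a,b,c,d}$ (as in topological recurrence) nor a single good pair $(m,n)$ (as in measurable recurrence), but a \emph{positive density} set of $n$ with $|f_j(an+b)-f_j(cn+d)|<\epsilon$ simultaneously for all $j$. The natural device is to consider the single completely multiplicative function $g=(f_1,\ldots,f_r)\colon\mathbb{N}\to(\mathbb{S}^1)^r$ (a compact abelian group), form the associated Furstenberg system / the orbit closure of $g$ inside $(\mathbb{S}^1)^r$ under multiplicative shifts, and show the proof of Theorem \ref{thm:main}(iii)$\Rightarrow$(i) actually produces a positive-density return-time set. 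Concretely, I expect the argument behind $R_{a,b,c,d}$ being a set of multiplicative recurrence to go via a $U^s$-uniformity / structure dichotomy for multiplicative functions (Frantzikinakis–Host type), where the ``structured'' part is (approximately) a product of a pretentious character $n\mapsto n^{it}\chi(n)$ and a nilsequence-type term; on the linear forms $an+b$, $cn+d$ with $a=c$ and $a\mid\mathrm{lcm}(b,d)$ the pretentious parts nearly cancel for $n$ in a positive-density (in fact Bohr-type) set, while the Gowers-uniform part contributes a negligible $L^2$ error. Averaging over $n\le N$ and using $\int |g(an+b)-g(cn+d)|^2\,dn$ small then forces $|g(an+b)-g(cn+d)|<\epsilon$ on a set of $n$ of positive lower density, and one tracks constants to see this density is effectively computable.

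For the ``only if'' direction, I would argue by contrapositive exactly as in Theorem \ref{thm:main}: if $a\ne c$, or $a=c$ but $a\nmid\mathrm{lcm}(b,d)$, then $R_{a,b,c,d}$ fails to be a set of topological multiplicative recurrence, so there is a finite coloring $\mathbb{N}=C_1\sqcup\cdots\sqcup C_s$ with no monochromatic pair $m/n\in R_{a,b,c,d}$. Encode this coloring by a completely multiplicative function: choosing primes $p_1,\ldots,p_s$ and setting $f(p)$ to be a suitable root of unity according to the color class of $p$ (i.e. lift the coloring to a completely multiplicative $\{1,\ldots,s\}$-valued-ish invariant via a character of $(\mathbb{Z}/q\mathbb{Z})^\times$ or via a Bohr set obstruction), one obtains a single $f\colon\mathbb{N}\to\mathbb{S}^1$ with $\inf_n|f(an+b)-f(cn+d)|\ge\delta>0$; taking $\epsilon=\delta$ and $r=1$ makes $\mathcal{A}_{a,b,c,d}(f;\epsilon)=\varnothing$, a fortiori not of positive lower density. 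The only subtlety is that the counterexample colorings in the proof of Theorem \ref{thm:main} must be of the kind realizable by a completely multiplicative phase — but the obstructions there are built from congruence conditions / pretentious characters precisely so that they are, so this transfer is essentially bookkeeping.

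The main obstacle is the ``if'' direction's \emph{quantitative} claim: upgrading ``recurrence'' (existence of one good pair, or nonempty intersection with a color class) to ``positive lower density of good $n$, effectively computable.'' This requires that the proof of Theorem \ref{thm:main}(iii)$\Rightarrow$(i) be run not as a soft ergodic/Furstenberg-correspondence statement but as a finitary $L^2$ estimate $\frac1N\sum_{n\le N}\prod_{j}\mathbf 1[|f_j(an+b)-f_j(cn+d)|\ge\epsilon]\le 1-c(\epsilon,a,b,c,d,r)$ for all large $N$, with an explicit $c>0$. I expect to get this from an inverse theorem controlling $\frac1N\sum_{n\le N}\overline{g(an+b)}g(cn+d)$: either this correlation sum is $\epsilon$-close to having modulus bounded below (which, after a Cauchy–Schwarz / van der Corput maneuver to remove the extra parameter, forces many $n$ with $g(an+b)\approx g(cn+d)$), or $g$ correlates with a structured function on one of the forms, in which case a direct pigeonhole/Weyl-equidistribution argument on the Bohr set where the structured parts agree gives the positive density. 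Handling $r$ simultaneous constraints just means working in the group $(\mathbb{S}^1)^r$ throughout, which costs only a dimensional factor in the final density bound. The remaining steps — reducing to $a=c$ by clearing denominators, reducing mod $\mathrm{lcm}(b,d)$ to exploit $a\mid\mathrm{lcm}(b,d)$, and verifying effectivity of every constant — are routine once the inverse-theorem input is in place.
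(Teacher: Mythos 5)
Your ``only if'' direction is essentially the paper's: for each failure case ($a\neq c$; $a=c$ with $v_p(a)>\max\{v_p(b),v_p(d)\}$ and $v_p(b)=v_p(d)$; $a=c$ with $v_p(a)>\max\{v_p(b),v_p(d)\}$ and $v_p(b)\neq v_p(d)$) the paper exhibits a concrete completely multiplicative $f\colon\mathbb N\to\mathbb S^1$ (an Archimedean character $n\mapsto n^{it}$, a modified Dirichlet character, or a $p$-adic-valuation phase) with $\liminf_n|f(an+b)-f(cn+d)|>0$, so taking $r=1$ and $\epsilon$ small makes $\mathcal{A}$ finite. Your description of lifting the coloring obstruction to a multiplicative phase is the right idea, and you are correct that this transfer is bookkeeping, since the paper's colorings in Propositions \ref{prop:1}--\ref{prop:3} are built from exactly these characters.

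Your ``if'' direction, however, has a genuine gap. You guess that the paper's proof of Theorem~\ref{thm:main}(iii)$\Rightarrow$(i) runs through a Frantzikinakis--Host structure/uniformity dichotomy, with the pretentious part cancelling on a Bohr set and the Gowers-uniform part dying in $L^2$, and that tracking constants in such an argument yields an effective positive lower density, simultaneously for all $r$. But the paper explicitly notes that this is precisely the route that is \emph{not} available: the simultaneous ($r\geq 2$) conjecture of Charamaras--Mountakis--Tsinas is ``currently out of reach via pretentious methods,'' and even for $r=1$ that machinery delivered only upper logarithmic density, not lower asymptotic density. The actual mechanism is elementary and combinatorial. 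After reducing via Lemma~\ref{lem:reduction} to the case $a=b(b-1)$, Proposition~\ref{prop:a=b(b-1)} produces, for every $k$, a length $H_k$ such that \emph{every} window of $H_k$ consecutive integers contains a $k$-clique of the graph $G_{a,b,a,b-1}$; this uniform-in-position statement is the quantitative input your plan lacks. One then colors $\mathbb N$ with $k^r$ colors according to which arcs of $\mathbb S^1$ the arguments $\arg f_j(n)$ fall into, applies pigeonhole inside each $(k^r+1)$-clique in each window $I_t$ to find two adjacent same-colored vertices $x_t,y_t$, extracts from $x_t/y_t=(an_t+b)/(an_t+(b-1))$ an element $n_t\in\mathcal{A}_{a,b,a,b-1}(f_1,\ldots,f_r;\epsilon)$, and a gcd bookkeeping step (forcing $g_t=x_t-y_t\leq H_{k^r+1}$, hence a fixed $g$ on a positive proportion of windows) shows that $\{1,\ldots,JH_{k^r+1}^2\}$ contains at least $J$ distinct such $n_t$. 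This gives lower asymptotic density at least $H_{k^r+1}^{-2}$, which is explicit. Without the every-window clique result, your $L^2$/inverse-theorem sketch has no clear path to the claimed lower asymptotic density and, as the paper itself cautions, breaks down for $r>1$.
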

\noindent
\textit{Remark.} Adapting the proof of the sufficiency part (see Section~\ref{section:thm2suff}), one can prove the following generalization: if \( a = c \) and \( a \mid \mathrm{lcm}(b, d) \), then for any measure-preserving action \( T = (T_n)_{n \in \mathbb{N}} \) of the semigroup \( (\mathbb{N}, \times) \) on a probability space \( (X, \mathcal{B}, \mu) \), and for any Borel set \( B \in \mathcal{B} \) with \( \mu(B) > 0 \), the set of integers \( n \geq 1 \) for which
\[
\mu(T_{an+b}^{-1}B \cap T_{cn+d}^{-1}B) > 0
\]
has positive lower asymptotic density.

For $r=1$, we improve \cite[Theorem 1.1]{charamaras2024multiplicativerecurrencelinearpatterns} by establishing positive lower asymptotic density in place of the weaker upper logarithmic density. For pretentious or finitely generated functions, this also improves \cite[Proposition 5.1]{charamaras2024multiplicativerecurrencelinearpatterns} by upgrading the density from lower logarithmic to lower asymptotic. 
For general $r \in \mathbb{N},$ the following result is an immediate consequence of Theorem \ref{thm:diophantine}.

\begin{corollary}
Let \( r, a  \in \mathbb{N} \) and \( b, d \in \mathbb{Z} \). If \( a \mid \mathrm{lcm}(b, d) \), then for any completely multiplicative functions $f_1, \ldots, f_r:\mathbb{N} \to \mathbb{S}^1,$ we have
\begin{align*}
\liminf_{n \to \infty} \max_{1 \leq j \leq r} |f_j(an+b)-f_j(an+d)| = 0.
\end{align*}
\end{corollary}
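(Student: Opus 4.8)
The plan is to derive the corollary directly from Theorem~\ref{thm:diophantine}, which requires essentially no new work. Note first that the hypothesis $a \mid \mathrm{lcm}(b,d)$ together with the trivial fact $a = a$ is precisely condition (iii) of Theorem~\ref{thm:main} with $c$ replaced by $a$; equivalently, it is the "if" direction of the characterization in Theorem~\ref{thm:diophantine}. So for every $\epsilon > 0$ and every choice of completely multiplicative $f_1, \dots, f_r : \mathbb{N} \to \mathbb{S}^1$, the set
\[
\mathcal{A}_{a,b,a,d}(f_1,\ldots,f_r;\epsilon) = \{ n \in \mathbb{N} \,:\, |f_j(an+b) - f_j(an+d)| < \epsilon \ \ \forall\, 1 \leq j \leq r \}
\]
has positive lower asymptotic density. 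In particular it is non-empty (indeed infinite), so there exists $n = n(\epsilon)$ with $\max_{1 \leq j \leq r} |f_j(an+b) - f_j(an+d)| < \epsilon$.

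Next I would argue that such $n(\epsilon)$ can be taken arbitrarily large, so that the $\liminf$ as $n \to \infty$ is genuinely controlled rather than just the infimum over all $n$. This is immediate: each set $\mathcal{A}_{a,b,a,d}(f_1,\dots,f_r;\epsilon)$ having positive lower density is in particular infinite, hence unbounded, so one may select $n(\epsilon) \to \infty$ along a sequence $\epsilon = \epsilon_k \downarrow 0$. Feeding this into the definition of $\liminf$ gives
\[
\liminf_{n \to \infty} \max_{1 \leq j \leq r} |f_j(an+b) - f_j(an+d)| \leq \epsilon_k
\]
for every $k$, and letting $k \to \infty$ forces the $\liminf$ to be $0$. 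Since the quantity is manifestly non-negative, it equals $0$, completing the proof.

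There is no real obstacle here — the corollary is a formal consequence of the quantitative statement in Theorem~\ref{thm:diophantine}, which is why the excerpt labels it "an immediate consequence." The only point worth a sentence of care in the write-up is the reduction of indices: one applies Theorem~\ref{thm:diophantine} with the fourth and second parameters being $b$ and $d$ and with $c = a$, observing that the hypothesis $a \mid \mathrm{lcm}(b,d)$ is exactly the required condition "$a = c$ and $a \mid \mathrm{lcm}(b,d)$" in that special case. Everything else is the standard passage from "a set of positive lower density is infinite" to "a $\liminf$ vanishes."
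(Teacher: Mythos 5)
Your proof is correct and is exactly the intended derivation: the paper itself presents the corollary as an immediate consequence of Theorem~\ref{thm:diophantine}, and the only content needed is that a set of positive lower asymptotic density is infinite, which you supply. Nothing to add.
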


 This confirms the simultaneous Diophantine approximation conjecture posed by Charamaras, Mountakis, and Tsinas \cite[Conjecture 2]{charamaras2024multiplicativerecurrencelinearpatterns}, which is currently out of reach via pretentious methods. See \cite[Section 1.4]{charamaras2024multiplicativerecurrencelinearpatterns} for a brief discussion of the challenges.

Under the coprimality condition $\operatorname{gcd}(a,b,d)=1,$\footnote{If \( \gcd(a, b, d) = g  \), 
then
\[
\frac{an + b}{an + d} = \frac{(a/g)n + (b/g)}{(a/g)n + (d/g)},
\]
so one may assume without loss of generality that \( \gcd(a, b, d) = 1 \).
In this case, \( a \mid \operatorname{lcm}(b, d) \) if and only if \( a \mid bd \).
} they further conjecture that the assumption $a \mid bd$ is also sufficient for \( R_{a,b,a,d} \) to be a set of multiplicative recurrence (see \cite[Conjecture 1]{charamaras2024multiplicativerecurrencelinearpatterns}), which is now confirmed by  
Theorem~\ref{thm:main}.

In this paper, we introduce a novel graph-theoretic perspective on multiplicative recurrence that is both elementary and surprisingly powerful. Given a non-empty set \( R \subseteq \mathbb{Q}_{>0} \setminus \{1\} \), one can associate a graph \( G = G(R) = (V, E) \) with vertex set \( V := \mathbb{N} \) and edge set
\[
E := \left\{ \{m,n\} \subseteq \mathbb{N}^2 \,:\, \frac{m}{n} \in R \right\}.
\]

Let \( \chi(G) \) denote the \emph{chromatic number} of \( G \), that is, the minimum number of colors needed to color the vertices with no adjacent vertices sharing a color. Then \( R \) is, by definition, a set of topological multiplicative recurrence if and only if \( \chi(G(R)) = \infty \). 

Moreover, let $\omega(G)$ denote the \textit{clique number} of $G,$ that is, the number of vertices in a maximum clique in $G.$ Then \( R \) is, by a pigeonhole argument, a set of multiplicative recurrence if  \( \omega(G(R)) = \infty \) (see Lemma \ref{lem:moreira}). Note that the chromatic number $\chi(G) $ is always at least the clique number $\omega(G).$

In what follows, we write \( G_{a,b,c,d} := G(R_{a,b,c,d}) \). 
Then the implication \textnormal{(ii)} $\implies$ \textnormal{(iii)} in Theorem~\ref{thm:main} is equivalent to the statement that \( \chi(G_{a,b,c,d}) = \infty \) only if \( a = c \) and \( a \mid \mathrm{lcm}(b, d) \). Also, the implication \textnormal{(iii)} $\implies$ \textnormal{(ii)} is weaker than the statement that \( \omega(G_{a,b,c,d}) = \infty \) if \( a = c \) and \( a \mid \mathrm{lcm}(b, d) \). Finally, the implication \textnormal{(i)} $\implies$ \textnormal{(ii)} is trivial.

\medskip
\noindent\textit{Notation.} 
 Given a graph $G=(V,E)$, we denote by $\chi(G), \omega(G)$ the \textit{chromatic number} and the \textit{clique number} of $G$, respectively. Given a subset $W \subseteq V,$ we denote by $G[W]$ the \textit{induced subgraph} on $W$, with vertex set $W$ and edge set $\{ \{ w, w'\} \in E \,:\, w,w' \in W \}$. Given a prime \( p \), we denote by \( v_p : \mathbb{Q} \to \mathbb{Z} \cup \{\infty\} \) the \textit{\( p \)-adic valuation}. We denote by $\lfloor \cdot  \rfloor, \lceil \cdot  \rceil, \{ \cdot\}, \| \cdot \|$ the \textit{floor function}, the \textit{ceiling function}, the \textit{fractional part function}, and the \textit{distance to the nearest integer function}, respectively. Given a set of integers $A \subseteq \mathbb{N},$ we denote by $\underline{d}(A)$ its \textit{lower asymptotic density}, defined as
\begin{align*}
\underline{d}(A):=\liminf_{N \to \infty} \frac{|A \cap \{ 1, \ldots, N\}|}{ N}.
\end{align*}

\section{Proof of Theorem \ref{thm:main}: $\textnormal{(ii)} \implies \textnormal{(iii)}$}
  In this section, we establish the implication $\textnormal{(ii)} \implies \textnormal{(iii)}$ in Theorem \ref{thm:main}; that is, if either $a \neq c$, or $a=c$ but $a \nmid \mathrm{lcm}(b,d),$ then $\chi(G_{a,b,c,d})<\infty.$ Note that if $a\neq c$, then neither $0$,$1$, nor $\infty$ are limit points of $R_{a,b,c,d}$, meaning that $R_{a,b,c,d} \subseteq (\beta^{-1},\alpha^{-1})\cup(\alpha,\beta)$ for some $1<\alpha<\beta.$ Otherwise, if $a=c$, then $a \nmid \mathrm{lcm}(b,d)$ if and only if $v_p(a)>\max\{v_p(b),v_p(d)\}$ for some prime $p.$ Therefore, it suffices to establish the following explicit bounds on the chromatic numbers.

 
\begin{theorem} \label{thm:nec}
Let \( a, c \in \mathbb{N} \) and \( b, d \in \mathbb{Z} \).  
\begin{enumerate}[label=\textnormal{(\roman*)}]
    \item Suppose \( a \ne c \). Then
    \[
    \chi(G_{a,b,c,d}) \leq \left\lceil \frac{\log \alpha\beta}{\log \alpha} \right\rceil
    \]
    for any $1<\alpha<\beta$ satisfying $R_{a,b,c,d} \subseteq (\beta^{-1},\alpha^{-1}) \cup (\alpha,\beta).$

    \item Suppose \( a = c \). If there exists a prime \( p \) such that \( v_p(a) > \max\{ v_p(b), v_p(d) \} \) and \( v_p(b) = v_p(d) \), then
    \[
    \chi(G_{a,b,c,d}) \leq p^{v_p(b - d)}(p - 1).
    \]

    \item Suppose \( a = c \). If there exists a prime \( p \) such that \( v_p(a) > \max\{ v_p(b), v_p(d) \} \) and \( v_p(b) \ne v_p(d) \), then
    \[
    \chi(G_{a,b,c,d}) = 2.
    \]
\end{enumerate}
\end{theorem}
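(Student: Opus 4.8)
\emph{Overall plan.} In each of the three cases I would simply write down an explicit proper colouring of $G_{a,b,c,d}$ with the stated number of colours. Since an edge $\{m,n\}$ certifies that one of $m/n$, $n/m$ lies in $R_{a,b,c,d}$, it suffices to produce a map $c\colon\mathbb{N}\to S$ into a finite set such that $c(m)=c(n)$ forces both $m/n$ and $n/m$ out of $R_{a,b,c,d}$; then $\chi(G_{a,b,c,d})\le|S|$. The right $S$ is a finite quotient of a group detecting the ratios: $(\mathbb{R}_{>0},\times)\cong(\mathbb{R},+)$ via $\log$ in case (i), and the group of $p$-adic units in cases (ii) and (iii). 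For (i): given $1<\alpha<\beta$ with $R_{a,b,c,d}\subseteq(\beta^{-1},\alpha^{-1})\cup(\alpha,\beta)$, any edge $\{m,n\}$ with $m>n$ forces $m/n\in(\alpha,\beta)$, i.e.\ $\log\alpha<\log m-\log n<\log\beta$; put $k:=\lceil\log(\alpha\beta)/\log\alpha\rceil$ and colour $n$ by $\lfloor\log n/\log\alpha\rfloor\bmod k$. Then $\lfloor\log m/\log\alpha\rfloor-\lfloor\log n/\log\alpha\rfloor$ is an integer lying strictly between $0$ (since $\log m-\log n>\log\alpha$) and $\log(\alpha\beta)/\log\alpha\le k$ (since $\log m-\log n<\log\beta$ and $\lfloor x\rfloor-\lfloor y\rfloor<(x-y)+1$), hence in $\{1,\dots,k-1\}$, so the colours of $m$ and $n$ differ mod $k$ and $\chi(G_{a,b,c,d})\le k$.

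\emph{The $p$-adic set-up for (ii)--(iii).} Assume $a=c$ and fix a prime $p$ with $v_p(a)>\max\{v_p(b),v_p(d)\}$; this forces $v_p(b),v_p(d)<\infty$, so $b,d\ne 0$. For every $n\ge 1$ one has $v_p(an)=v_p(a)+v_p(n)>\max\{v_p(b),v_p(d)\}$, whence $v_p(an+b)=v_p(b)$ and $v_p(an+d)=v_p(d)$; consequently every $r=(an+b)/(an+d)\in R_{a,b,c,d}$ satisfies, independently of $n$,
\[
v_p(r)=v_p(b)-v_p(d),\qquad v_p(r-1)=v_p\bigl(\tfrac{b-d}{an+d}\bigr)=v_p(b-d)-v_p(d).
\]
In particular, when $v_p(b)=v_p(d)$, every edge $\{m,n\}$ has $v_p(m)=v_p(n)$. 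For (iii): here $v_p(b)\ne v_p(d)$, so $\delta:=v_p(b)-v_p(d)$ is a fixed nonzero integer with $v_p(r)=\delta$ for all $r\in R_{a,b,c,d}$, and $b\ne d$ makes $R_{a,b,c,d}\ne\emptyset$ (e.g.\ for large $n$), so $\chi(G_{a,b,c,d})\ge 2$; colouring $n$ by $\lfloor v_p(n)/|\delta|\rfloor\bmod 2$ is proper, since any edge forces $v_p(m)-v_p(n)\in\{\delta,-\delta\}$, which flips this colour. Hence $\chi(G_{a,b,c,d})=2$.

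\emph{Part (ii).} Now $v_p(b)=v_p(d)$; assume also $b\ne d$ (otherwise $R_{a,b,c,d}=\emptyset$ and the bound is trivial), so every $r\in R_{a,b,c,d}$ has $v_p(r)=0$ and $v_p(r-1)=\delta$ with $\delta:=v_p(b-d)-v_p(b)\ge 0$. Colour $n$ by the residue of its $p$-free part $n/p^{v_p(n)}$ modulo $p^{\delta+1}$; as $p\nmid n/p^{v_p(n)}$, this lands in $(\mathbb{Z}/p^{\delta+1}\mathbb{Z})^{\times}$, of cardinality $p^{\delta}(p-1)$. If $\{m,n\}$ is an edge, then $v_p(m)=v_p(n)=:v$, so writing $m=p^{v}m_0$, $n=p^{v}n_0$ with $p\nmid m_0n_0$ we have $m_0/n_0=m/n$, and whether $m/n$ or $n/m$ lies in $R_{a,b,c,d}$ we get $v_p(m_0-n_0)=v_p(m_0/n_0-1)=\delta$ (using $v_p(n_0)=0$ and $v_p(r^{\pm1}-1)=\delta$); hence $m_0\not\equiv n_0\pmod{p^{\delta+1}}$ and the colours differ. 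Therefore $\chi(G_{a,b,c,d})\le p^{\delta}(p-1)\le p^{v_p(b-d)}(p-1)$.

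\emph{Main obstacle.} Cases (i) and (iii) are essentially bookkeeping once the colouring is written down. The substance is the uniformity exploited in (ii): every $r\in R_{a,b,c,d}$ has $v_p(r-1)$ equal to \emph{exactly} $\delta$, not merely $\ge\delta$. This is what makes $R_{a,b,c,d}$ disjoint from the unit subgroup $1+p^{\delta+1}\mathbb{Z}_p$ and so lets us descend to the finite quotient $(\mathbb{Z}/p^{\delta+1}\mathbb{Z})^{\times}$; by contrast, colouring $\mathbb{N}$ by residues modulo a power of $p$ \emph{without} first stripping the $p$-part fails precisely on the $p$-power multiples, where arbitrarily large gaps in $v_p$ can still produce an edge. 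Recognizing that this multiplicative-group quotient is the governing structure is the one non-mechanical step; the remainder is checking the valuation identities above.
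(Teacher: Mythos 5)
Your proposal is correct and follows the same overall strategy as the paper: exhibit explicit proper colourings coming from level sets of Archimedean and $p$-adic characters. Parts (ii) and (iii) coincide with the paper's colourings essentially verbatim — the residue of the $p$-free part of $n$ modulo $p^{k+1}$ (the paper's Proposition~\ref{prop:2} on $G_{p,k}$) and $\lfloor v_p(n)/k\rfloor\bmod 2$ (the paper's Proposition~\ref{prop:3} on $G'_{p,k}$). Two minor differences are worth noting. In part (i), the paper partitions the circle by quantizing $\{t\log n\}$ with $t=1/\log(\alpha\beta)$ into $k$ equal arcs and shows $\|t\log(m/n)\|\geq 1/k$ whenever $m/n\in(\alpha,\beta)$; you instead colour by $\lfloor\log n/\log\alpha\rfloor\bmod k$ and show that the floor-difference lands in $\{1,\dots,k-1\}$. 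Both are valid $k$-colourings with the same $k=\lceil\log(\alpha\beta)/\log\alpha\rceil$ and the same underlying idea, namely a level-set colouring of $n\mapsto n^{it}$. In part (ii) you exploit that $v_p(r-1)$ equals $\delta:=v_p(b-d)-v_p(b)$ exactly for every $r\in R_{a,b,a,d}$, which gives the marginally sharper bound $p^{\delta}(p-1)\leq p^{v_p(b-d)}(p-1)$; the paper's Proposition~\ref{prop:2} is framed with the looser inequality $v_p(r-1)\leq v_p(b-d)$ for a simpler statement, but the underlying colouring is the same. The organizational difference — the paper isolates three auxiliary graphs $G_{\alpha,\beta}$, $G_{p,k}$, $G'_{p,k}$ and then proves $R_{a,b,c,d}$ lies in the corresponding ratio sets, whereas you colour $G_{a,b,c,d}$ directly — is cosmetic.
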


When $a \neq c,$ Donoso, Le, Moreira and Sun \cite[Lemma 3.1]{MR4594405} proved, using ergodic methods, that $R_{a,b,c,d}$ is not a set of topological multiplicative recurrence, which is equivalent to $\chi(G_{a,b,c,d})<\infty$. We refine their result by providing an explicit bound through a purely combinatorial argument. 

When $a=c,$ they also proved 
that $\chi(G_{a,b,c,d}) \leq p^{v_p(b - d)}(p - 1)$ if $p | a$ and $p \nmid bd$ for some prime $p$ (see \cite[Proposition 3.3]{MR4594405}). We generalize their result to the broader case where $ v_p(a) > \max\{ v_p(b), v_p(d) \} $ for some prime $p,$ that is, when $a \nmid \mathrm{lcm}(b,d).$

Before proving the theorem, we first bound the chromatic numbers of three classes of graphs by constructing explicit colorings, motivated by the unimodular completely multiplicative functions appearing in Section~\ref{sec:dio_nec}.

\begin{proposition} \label{prop:1}
Given \( 1 < \alpha < \beta \), define
\[
S_{\alpha,\beta} := \left\{ r \in \mathbb{Q}_{>0} \,:\, r \in (\beta^{-1}, \alpha^{-1}) \cup (\alpha, \beta) \right\},
\]
and denote \( G_{\alpha,\beta} := G(S_{\alpha,\beta}) \). Then
\[
\chi(G_{\alpha,\beta}) \leq \left\lceil \frac{\log \alpha\beta}{\log \alpha} \right\rceil .
\]
\end{proposition}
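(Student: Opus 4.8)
The plan is to produce an explicit proper colouring of $G_{\alpha,\beta}$ using exactly $k := \lceil (\log\alpha\beta)/\log\alpha \rceil$ colours, thereby bounding $\chi(G_{\alpha,\beta})$ from above. The first step is to pass to logarithms and observe that the multiplicative adjacency condition becomes additive: writing $\log_\alpha x := (\log x)/\log\alpha$, and noting that for $m > n$ the reciprocal $n/m$ lies below $1 < \alpha$ and hence outside $S_{\alpha,\beta}$, the pair $\{m,n\}$ (with $m > n$) is an edge of $G_{\alpha,\beta}$ if and only if $\log_\alpha m - \log_\alpha n \in (1,\log_\alpha\beta)$. Thus, via the injection $n \mapsto \log_\alpha n$, the graph $G_{\alpha,\beta}$ sits inside a distance graph on $\mathbb{R}$ whose only forbidden band of distances is the open interval $(1,\log_\alpha\beta)$, and it suffices to colour $\mathbb{R}$ so that any two points whose difference lies in that band receive distinct colours.

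For this I would use the standard ``interval block'' colouring. Partition $\mathbb{R}$ into the half-open blocks $I_j := [j,j+1)$, $j \in \mathbb{Z}$, and assign to $n \in \mathbb{N}$ the colour $c(n) := \lfloor \log_\alpha n \rfloor \bmod k$. To verify properness, suppose $m \neq n$ satisfy $c(m) = c(n)$, say $m > n$. If $\lfloor \log_\alpha m\rfloor = \lfloor \log_\alpha n\rfloor$, then $0 < \log_\alpha m - \log_\alpha n < 1$, so $1 < m/n < \alpha$ and $\{m,n\}$ is not an edge. Otherwise $\lfloor \log_\alpha m\rfloor$ and $\lfloor \log_\alpha n\rfloor$ are distinct but congruent modulo $k$, hence differ by at least $k$, so that $\log_\alpha m - \log_\alpha n > (\lfloor \log_\alpha m\rfloor - \lfloor \log_\alpha n\rfloor) - 1 \geq k - 1$. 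Since $k = \lceil 1 + \log_\alpha\beta\rceil = 1 + \lceil \log_\alpha\beta\rceil$, we have $k - 1 = \lceil\log_\alpha\beta\rceil \geq \log_\alpha\beta$, whence $m/n > \beta$ and again $\{m,n\}$ is not an edge. In either case adjacent vertices get different colours, so $\chi(G_{\alpha,\beta}) \leq k$.

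I do not anticipate a genuine obstacle: this is a routine distance-graph colouring. The only points requiring mild care are the endpoint conventions — using half-open blocks $I_j = [j,j+1)$ and the strict inequality $\log_\alpha m - \log_\alpha n > (\lfloor\log_\alpha m\rfloor - \lfloor\log_\alpha n\rfloor) - 1$ means no boundary case $m/n = \alpha$ or $m/n = \beta$ ever needs separate attention (and it would be harmless anyway, the band being open) — and the bookkeeping identity $\lceil (\log\alpha\beta)/\log\alpha\rceil = 1 + \lceil \log_\alpha\beta\rceil$ together with $\log_\alpha\beta > 0$ (as $\beta > 1$), which in particular forces $k \geq 2$ so that the ``distinct blocks'' case is the only nontrivial one. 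It is worth remarking, as motivation for the construction, that $c$ is essentially the discretisation of the level sets of the completely multiplicative (Archimedean) character $n \mapsto n^{2\pi i/(k\log\alpha)}$, which is the object that will reappear in Section~\ref{sec:dio_nec}.
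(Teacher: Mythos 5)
Your proof is correct and takes essentially the same approach as the paper: both produce an explicit proper $k$-colouring by partitioning the logarithmic scale into $k$ residue classes, i.e., a level-set colouring of an Archimedean character. The paper quantises $\{\log n/\log\alpha\beta\}$ into $k$ arcs (frequency $1/\log\alpha\beta$), while you take $\lfloor\log n/\log\alpha\rfloor\bmod k$ (frequency $1/(k\log\alpha)$); these coincide exactly when $k=\log\alpha\beta/\log\alpha$ and differ only by bookkeeping otherwise, with your version avoiding the fractional-part and nearest-integer notation.
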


\begin{proof}
Let $t = \frac{1}{\log\alpha\beta}$. For each integer $k \geq \frac{\log \alpha\beta}{\log \alpha},$ consider the $k$-coloring $c:\N\to \{1,\ldots,k\}$ given by
  \[ c(n) = j \quad\text{if}\quad \{t\log n\} \in \bigg[\frac{j-1}{k},\, \frac{j}{k} \bigg), \]
which can be interpreted as a ``level-set coloring" of the Archimedean character $n \mapsto n^{it}.$
We claim that $c$ is a proper $k$-coloring of $G_{\alpha,\beta},$ that is, no adjacent vertices share the same color. Suppose to the contrary that there exist integers $m \neq n \geq 1$ with $c(m)=c(n)$ such that 
$m/n \in S_{\alpha,\beta}.$ Without loss of generality, we assume that $m/n \in (\alpha,\beta)$ by the symmetry of $S_{\alpha,\beta}.$ 
On one hand, since $m/n \in (\alpha,\beta)$, we have 
  \[ \left\lVert t\log \frac{m}{n} \right\rVert \geq \min\left\{\frac{\log \alpha}{\log \alpha\beta}, 1-\frac{\log \beta}{\log \alpha\beta} \right\} = \frac{\log \alpha}{\log \alpha\beta} \geq \frac{1}{k}\] 
by our choice of $t.$ 
On the other hand, since $c(m)=c(n),$ we have 
  \[ \left\lVert t\log \frac{m}{n} \right\rVert = \left\lVert t\log m - t\log n \right\rVert < \frac{1}{k}. \]
Therefore, contradiction arises and $c$ is a proper $k$-coloring of $G_{\alpha,\beta}$ as claimed. Taking 
$k=\lceil (\log \alpha\beta)/(\log \alpha) \rceil,$ the proposition follows.
\end{proof}

\begin{proposition} \label{prop:2}
Given a prime $p$ and an integer $k \geq 0,$ define
\begin{align*}
T_{p,k}:=\{ r \in \mathbb{Q}_{>0} \,:\, v_p(r)=0, v_p(r-1) \leq k \},
\end{align*}
and denote \( G_{p,k} := G(T_{p,k}) \).\footnote{ Let \( | \cdot |_p \) denote the \( p \)-adic absolute value on \( \mathbb{Q} \), defined by \( |r|_p := p^{-v_p(r)} \) for \( r \in \mathbb{Q} \). Then  
\[
T_{p,k} = \left\{ r \in \mathbb{Q}_{>0} \,:\, |r|_p = 1,\ |r - 1|_p \geq p^{-k} \right\},
\]
which can be regarded as a non-Archimedean analogue of the sets \( S_{\alpha, \beta} \).
} Then
\[
\chi(G_{p,k}) \leq p^k(p-1).
\]
\end{proposition}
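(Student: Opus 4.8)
The plan is to adapt the ``level-set coloring'' from the proof of Proposition~\ref{prop:1}, replacing the Archimedean character $n \mapsto n^{it}$ by its non-Archimedean counterpart: a Dirichlet character modulus $p^{k+1}$ applied to the $p$-free part of $n$. Concretely, write each $n \in \mathbb{N}$ uniquely as $n = p^{v_p(n)} u$ with $p \nmid u$, and define the coloring $c : \mathbb{N} \to (\mathbb{Z}/p^{k+1}\mathbb{Z})^{\times}$ by $c(n) := u \bmod p^{k+1}$. Since $|(\mathbb{Z}/p^{k+1}\mathbb{Z})^{\times}| = \varphi(p^{k+1}) = p^{k}(p-1)$, this uses exactly the claimed number of colors, so the whole task reduces to checking that $c$ is a proper coloring of $G_{p,k}$.

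For propriety, suppose $m \ne n$ are adjacent, i.e. $m/n \in T_{p,k}$. Write $m = p^{a} u$ and $n = p^{b} w$ with $p \nmid u, w$. The first defining condition $v_p(m/n) = 0$ forces $a = b$, hence $m/n = u/w$; the second condition becomes $v_p(m/n - 1) = v_p\!\big((u-w)/w\big) = v_p(u - w) \le k$, using $p \nmid w$. This is precisely the statement $u \not\equiv w \pmod{p^{k+1}}$, so $c(m) \ne c(n)$, and we conclude $\chi(G_{p,k}) \le p^{k}(p-1)$. Taking $k = 0$ recovers a coloring by the residue of the $p$-free part modulo $p$, consistent with the bound $p-1$.

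I do not expect a genuine obstacle here: the argument is a mechanical translation of the two valuation conditions defining $T_{p,k}$ into a single congruence. The only points deserving a line of verification are the elementary facts that $1 \notin T_{p,k}$ (so that adjacent vertices are genuinely distinct, since $v_p(1-1) = \infty > k$) and that the edge relation is well defined on unordered pairs because $T_{p,k}$ is stable under $r \mapsto 1/r$ — both immediate from the identity $v_p(1/r - 1) = v_p(r - 1)$ valid whenever $v_p(r) = 0$.
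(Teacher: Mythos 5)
Your coloring $c(n) := (n/p^{v_p(n)}) \bmod p^{k+1}$ is exactly the paper's (the paper just phrases it recursively as $c(n) = c(n/p)$ for $p \mid n$), and your verification is the contrapositive of the paper's contradiction argument, with the same valuation computation at its core. Correct, and essentially the same proof.
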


\begin{proof}
Consider the $(p^{k}(p-1))$-coloring $c:\N\to (\mathbb{Z}/p^{k+1}\mathbb{Z})^{\times}$ given recursively by
  \[ c(n) = \begin{cases}
      n \Mod{p^{k+1}} &\text{if  $\,p \nmid n$},\\
  \hfil    c(n/p) &\text{if  $\,p \mid n$},
  \end{cases} \]
which appears in the proof of \cite[Proposition 3.3]{MR4594405}. One may interpret this as a ``level-set coloring" of a Dirichlet character of modulus $p^{k+1}.$ We claim that $c$ is a proper $k$-coloring of $G_{p,k},$ that is, no adjacent vertices share the same color. Suppose to the contrary that there exist integers $m \neq n \geq 1$ with $c(m)=c(n)$ such that 
$m/n \in T_{p,k}.$ On one hand, since $m/n \in T_{p,k}$ we have
\begin{align}
v_p(m)-v_p(n)=v_p(m/n)=0 \quad \text{and}  \quad v_p((m/n)-1) \leq k. \label{eq:vp} 
\end{align}
On the other hand, since $c(m)=c(n),$ if $m=p^{v_p(m)}m'$ and $n=p^{v_p(n)}n'$ for some integers $m',n' \geq 1,$ then $m' \equiv n' \Mod{p^{k+1}}$ and in particular, we have $v_p(m'-n') \geq k+1.$ Then, it follows from the leftmost equality in (\ref{eq:vp}) that
\begin{align*}
v_p \left( (m/n)-1\right) &= v_p((m'/n')-1) \\
&= v_p((m'-n')/n') \\
&= v_p(m'-n') \\
&\geq  k+1, 
\end{align*}
contradicting the rightmost inequality in (\ref{eq:vp}). Therefore, $c$ is a proper $(p^{k}(p-1))$-coloring of $G_{p,k}$ as claimed, and the proposition follows.  
\end{proof}

\begin{proposition} \label{prop:3}
Given a prime $p$ and an integer $k \geq 1,$ define
\begin{align*}
T'_{p,k}:=\{ r \in \mathbb{Q}_{>0} \,:\, |v_p(r)|=k \},
\end{align*}
and denote \( G'_{p,k} := G(T'_{p,k}) \). Then
\[
\chi(G'_{p,k}) = 2.
\]
\end{proposition}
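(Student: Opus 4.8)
The claim is that $\chi(G'_{p,k}) = 2$, where two positive integers $m, n$ are adjacent iff $|v_p(m) - v_p(n)| = k$ (noting $v_p(m/n) = v_p(m) - v_p(n)$ for integers). First I would observe the lower bound $\chi(G'_{p,k}) \geq 2$: the graph has at least one edge, e.g. $\{1, p^k\}$ satisfies $|v_p(p^k) - v_p(1)| = k \geq 1$, so it is not $1$-colorable. The substance is the upper bound $\chi(G'_{p,k}) \leq 2$, i.e. exhibiting a proper $2$-coloring.

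For the $2$-coloring, the natural choice is to color $n$ according to the residue of $v_p(n)$ in a suitable way. Writing $v = v_p(n) \geq 0$, set $c(n) := \lfloor v/k \rfloor \bmod 2$, i.e. partition $\mathbb{Z}_{\geq 0}$ into blocks $[0,k), [k,2k), [2k,3k), \dots$ and alternately two-color the blocks. Then I would check: if $\{m,n\}$ is an edge, then $|v_p(m) - v_p(n)| = k$, say $v_p(m) = v_p(n) + k$. Writing $v_p(n) = qk + s$ with $0 \leq s < k$, we get $v_p(m) = (q+1)k + s$, so $\lfloor v_p(m)/k \rfloor = q+1$ and $\lfloor v_p(n)/k \rfloor = q$, which have opposite parities. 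Hence $c(m) \neq c(n)$, so $c$ is proper, giving $\chi(G'_{p,k}) \leq 2$. Combined with the lower bound, $\chi(G'_{p,k}) = 2$.

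I expect no serious obstacle here — the proof is short and the only mild subtlety is bookkeeping the identity $v_p(m/n) = v_p(m) - v_p(n)$ so that the edge condition $|v_p(r)| = k$ for $r = m/n$ translates cleanly into $|v_p(m) - v_p(n)| = k$, and making sure the alternating-block coloring respects shifts by exactly $k$ (which is precisely why blocks of length $k$ work: a jump of size exactly $k$ always crosses exactly one block boundary). In keeping with the style of Propositions 3.1 and 3.2, I would phrase this as a "level-set coloring" of the non-Archimedean valuation $n \mapsto v_p(n)$.
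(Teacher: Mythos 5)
Your proof is correct and matches the paper's argument: both use the coloring $c(n) = \lfloor v_p(n)/k \rfloor \bmod 2$ and observe that an edge forces the two floor values to differ by exactly one. If anything, your write-up is slightly more careful than the paper's, which asserts the parity jump without spelling out the division $v_p(n) = qk + s$.
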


\begin{proof}
Consider the $2$-coloring $c:\N\to \mathbb{Z}/2\mathbb{Z}$ given by 
  \[ c(n) := \begin{cases}
      0 &\text{if } \lfloor v_{p}(n)/k \rfloor \equiv 0 \Mod{2},\\
      1 &\text{if } \lfloor v_{p}(n)/k \rfloor \equiv 1 \Mod{2}.
  \end{cases} \]
  We claim that $c$ is a proper $k$-coloring of $G'_{p,k}$, that is, no adjacent vertices share the same color. Suppose to the contrary that there exist integers $m \neq n \geq 1$ with $c(m)=c(n)$ such that 
$m/n \in T'_{p,k}.$ On one hand, since $m/n \in T'_{p,k}$ we have 
$|v_p(m)-v_p(n)|=|v_p(m/n)|=k,$ so that
\begin{align*}
\left\lfloor \frac{v_p(m)}{k} \right\rfloor - \left\lfloor \frac{v_p(m)}{k} \right\rfloor  \equiv  1 \Mod{2}.
\end{align*}
On the other hand, since $c(m)=c(n),$ we have 
\begin{align*}
\left\lfloor \frac{v_p(m)}{k} \right\rfloor \equiv  \left\lfloor \frac{v_p(m)}{k} \right\rfloor  \Mod{2},
\end{align*}
a contradiction. Therefore,  $c$ is a proper $2$-coloring of $G'_{p,k}$ as claimed. Also, since $G'_{p,k}$ is not totally disconnected, the proposition follows.
\end{proof}

Finally, Theorem~\ref{thm:nec} follows readily from the above propositions.

\begin{proof}[Proof of Theorem \ref{thm:nec}]
Suppose \( a \ne c \). Then by definition $R_{a,b,c,d} \subseteq S_{\alpha,\beta},$ so that $G_{a,b,c,d} \subseteq G_{\alpha,\beta}.$ In particular, it follows from Proposition \ref{prop:1} that
\begin{align*}
\chi(G_{a,b,c,d}) \leq \chi(G_{\alpha,\beta}) \leq \left\lceil \frac{\log \alpha\beta}{\log \alpha} \right\rceil.
\end{align*}

Suppose now that \( a = c \). Let $p$ be a prime for which $v_p(a)>\max\{ v_p(b), v_p(d)\}.$ Write \( a = p^\alpha A \), \( b = p^\beta B \), and \( d = p^\delta D \), where \( p \nmid ABD \); that is, \( \alpha = v_p(a) \), \( \beta = v_p(b) \) and \( \delta = v_p(d) \).

If \( \beta = \delta \), then 
\begin{align*}
r := \frac{an + b}{an + d} 
= \frac{p^{\alpha - \beta} A n + B}{p^{\alpha - \beta} A n + D},
\end{align*}
so \( v_p(r) = 0 \), and
\begin{align*}
r - 1 = \frac{B - D}{p^{\alpha - \beta} A n + D},
\end{align*}
so \( v_p(r - 1) = v_p(B - D) \leq v_p(b - d) \).
 Therefore, we have
 $R_{a,b,c,d} \subseteq T_{p,k}$ and  $G_{a,b,c,d} \subseteq G_{p,k}$ for $k=v_p(b-d).$ In particular, it follows from Proposition \ref{prop:2} that
\begin{align*}
\chi(G_{a,b,c,d}) \leq \chi(G_{p,k}) \leq p^{v_p(b - d)}(p - 1).
\end{align*}


If \( \beta > \delta \), then 
\begin{align*}
r:=\frac{an+b}{an+d}=
\frac{p^{\alpha-\delta} An + p^{\beta-\delta}B}{p^{\alpha-\delta} An + D},
\end{align*}
so $v_p(r) = \beta-\delta$; and if $\beta<\delta$ the analogous argument gives $v_p(r) =\delta- \beta$. Either way, we obtain $v_p(r) = |v_p(b)-v_p(d)|.$  Therefore, we have $R_{a,b,c,d} \subseteq T'_{p,k}$ and $G_{a,b,c,d} \subseteq G'_{p,k}$ for $k=|v_p(b)-v_p(d)|.$  In particular, it follows from Proposition \ref{prop:3} that
\begin{align*}
\chi(G_{a,b,c,d}) \leq \chi(G'_{p,k}) = 2.
\end{align*}
Also, since \( G_{a,b,c,d} \) is not totally disconnected, we must have \( \chi(G_{a,b,c,d}) > 1 \). The proof is therefore complete.
\end{proof}

\section{Proof of Theorem \ref{thm:main}: \textnormal{(iii)} $\implies$ \textnormal{(i)}}

In this section, we establish the implication $\textnormal{(iii)} \implies \textnormal{(i)}$ in Theorem \ref{thm:main}. We show in fact the stronger statement that $G_{a,b,a,d}$ contains arbitrarily large cliques. 

\begin{lemma} \label{lem:moreira}
Let $R \subseteq \mathbb{Q}_{>0} \setminus \{1\}$ be non-empty. If $\omega(G(R))=\infty,$ then $R$ is a set of multiplicative recurrence.\footnote{The converse, however, is not true. For example, it is well known that $\{n^k: n\in\N\}$ is a set of (additive) recurrence for each integer $k \geq 3$, hence $R_k:= \{2^{n^k} :n\in \N\}$ is a set of multiplicative recurrence. However, a triangle in $G(R_k)$ corresponds to a non-trivial solution to $a^k+b^k = c^k$, so $\omega(G(R_k))=2$.} 
\end{lemma}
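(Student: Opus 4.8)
The plan is to reduce the measurable multiplicative recurrence of $R$ to a pigeonhole argument once we have arbitrarily large cliques in $G(R)$. Suppose $T = (T_n)_{n \in \mathbb{N}}$ is a measure-preserving action of $(\mathbb{N}, \times)$ on $(X, \mathcal{B}, \mu)$ and $B \in \mathcal{B}$ has $\mu(B) > 0$. Pick an integer $K > 1/\mu(B)$, and by hypothesis choose a clique in $G(R)$ on $K$ vertices, say $n_1 < n_2 < \cdots < n_K$, so that $n_i/n_j \in R$ for all $i \neq j$ (using that $R$ is symmetric under $r \mapsto r^{-1}$, which holds since $\{m, n\} \in E$ is unordered). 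Consider the $K$ sets $T_{n_1}^{-1} B, \ldots, T_{n_K}^{-1} B$, each of measure $\mu(B)$ since each $T_{n_i}$ is measure-preserving. Since $K \mu(B) > 1 = \mu(X)$, these sets cannot be pairwise essentially disjoint, so there exist $i \neq j$ with $\mu(T_{n_i}^{-1} B \cap T_{n_j}^{-1} B) > 0$. Setting $m = n_i$ and $n = n_j$ (after possibly swapping to arrange $m/n \in R$, which is automatic by symmetry of $R$), we get $m/n \in R$ and $\mu(T_m^{-1} B \cap T_n^{-1} B) > 0$, which is exactly the defining property of a set of multiplicative recurrence.

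First I would state the elementary inclusion-exclusion / pigeonhole fact in the form: if $A_1, \ldots, A_K$ are measurable sets in a probability space with $\sum_{i} \mu(A_i) > \mu(X)$, then $\mu(A_i \cap A_j) > 0$ for some $i \neq j$; this follows from $\int_X \sum_i \mathbf{1}_{A_i} \, d\mu = \sum_i \mu(A_i) > \mu(X)$, so the function $\sum_i \mathbf{1}_{A_i}$ must exceed $1$ on a set of positive measure, and on that set at least two of the indicators are simultaneously $1$. Then I would apply this with $A_i = T_{n_i}^{-1} B$.

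There is essentially no obstacle here — the lemma is a soft consequence of the definitions, and the only points requiring a word of care are that each $T_{n_i}^{-1} B$ has measure exactly $\mu(B)$ (measure-preservation of the semigroup action) and that the edge relation and membership in $R$ are symmetric, so that whichever of $n_i/n_j$ or $n_j/n_i$ we land on still lies in $R$. The footnote's remark that the converse fails (via $\{2^{n^k}\}$ and Fermat's Last Theorem for the relevant exponents) is not needed for the proof itself and I would leave it as stated.

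\begin{proof}
Let $T = (T_n)_{n \in \mathbb{N}}$ be a measure-preserving action of $(\mathbb{N}, \times)$ on a probability space $(X, \mathcal{B}, \mu)$, and let $B \in \mathcal{B}$ with $\mu(B) > 0$. Choose an integer $K$ with $K\mu(B) > 1$. Since $\omega(G(R)) = \infty$, there exist distinct positive integers $n_1, \ldots, n_K$ forming a clique in $G(R)$; that is, $\{n_i, n_j\} \in E$, equivalently $n_i/n_j \in R$, for all $i \neq j$ (note $R$ is symmetric, as the edges of $G(R)$ are unordered pairs). Each set $T_{n_i}^{-1} B$ has $\mu(T_{n_i}^{-1} B) = \mu(B)$, so
\begin{align*}
\int_X \sum_{i=1}^{K} \mathbf{1}_{T_{n_i}^{-1} B} \, d\mu = \sum_{i=1}^{K} \mu(T_{n_i}^{-1} B) = K \mu(B) > 1 = \mu(X).
\end{align*}
Hence $\sum_{i=1}^{K} \mathbf{1}_{T_{n_i}^{-1} B}(x) \geq 2$ for all $x$ in a set of positive measure, so there exist $i \neq j$ with $\mu(T_{n_i}^{-1} B \cap T_{n_j}^{-1} B) > 0$. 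Setting $m = n_i$ and $n = n_j$, we have $m/n \in R$ and $\mu(T_m^{-1} B \cap T_n^{-1} B) > 0$. Therefore $R$ is a set of multiplicative recurrence.
\end{proof}
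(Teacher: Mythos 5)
Your proof is correct and follows essentially the same pigeonhole approach as the paper: your observation that $\int_X \sum_i \mathbf{1}_{T_{n_i}^{-1}B}\,d\mu > 1$ forces some $\mu(T_{n_i}^{-1}B \cap T_{n_j}^{-1}B) > 0$ is exactly the Bonferroni inequality the paper uses. One small imprecision worth correcting: you justify $n_i/n_j \in R$ for \emph{all} $i \neq j$ by asserting that $R$ is symmetric under $r \mapsto r^{-1}$, but this is false in general --- e.g.\ $R_{2,1,2,0}$ contains only ratios greater than $1$. What is actually true (and what the paper uses) is that the \emph{edge relation} is symmetric: $\{m,n\} \in E$ means $m/n \in R$ \emph{or} $n/m \in R$. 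Since $\mu(T_{n_i}^{-1}B \cap T_{n_j}^{-1}B)$ is unchanged under swapping $i$ and $j$, one labels $(m,n)$ as whichever ordering of $(n_i,n_j)$ places the ratio in $R$; this is precisely your ``after possibly swapping'' step, which is correct, just attributed to the wrong reason.
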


\begin{proof}
Let \( T = (T_n)_{n \in \mathbb{N}} \) be a measure-preserving action of the semigroup \( (\mathbb{N}, \times) \) on a probability space \( (X, \mathcal{B}, \mu) \), and \( B \in \mathcal{B} \) be a Borel set with positive measure \( \mu(B) > 0 \). Also, let $k=\lfloor \mu(B)^{-1} \rfloor+1.$ Then by assumption, there exists a $k$-clique $\{ n_1, \ldots, n_k\}$ in the graph $G(R)$ for some pairwise distinct integers $n_1, \ldots, n_k \geq 1;$ that is either $n_i/n_j \in R$ or $n_j/n_i \in R$ for all $1 \leq i < j \leq k.$ 

Applying Bonferroni's inequality, we have
\begin{align*}
\sum_{1 \leq i <j \leq k} \mu(T^{-1}_{n_i} B \cap T^{-1}_{n_j} B)
\geq \sum_{1 \leq i \leq k} \mu(T^{-1}_{n_i} B) -\mu \bigg( \bigcup_{1 \leq i \leq k} T^{-1}_{n_i} B \bigg).
\end{align*}
Since $T_{n_i}$ is measure-preserving for each $1 \leq i \leq k,$ the right-hand side equals
\begin{align*}
k\mu(B) -\mu \bigg( \bigcup_{1 \leq i \leq k} T^{-1}_{n_i} B \bigg)
\geq k\mu(B)-1,
\end{align*}
which is positive by our choice of $k.$ Therefore, there exist $1 \leq i< j \leq k$ such that
\begin{align*}
\mu(T^{-1}_{n_i} B \cap T^{-1}_{n_j} B)>0.
\end{align*}
Also, since either $n_i/n_j \in R$ or $n_j/n_i \in R,$ the lemma follows.
\end{proof}

Therefore, it suffices to prove the following theorem.

\begin{theorem} \label{thm:suff}
Let $a \in \mathbb{N}$ and $b \neq d \in \mathbb{Z}.$\footnote{When $b=d,$ the set $R_{a,b,a,b}$ is, by definition, empty.}
Suppose $a \, | \, \mathrm{lcm}(b,d).$ Then $\omega(G_{a,b,a,d})=\infty.$
\end{theorem}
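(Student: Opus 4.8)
The goal is to show that, under the hypothesis $a \mid \mathrm{lcm}(b,d)$ (with $b \neq d$), the graph $G_{a,b,a,d}$ contains arbitrarily large cliques. A $K$-clique is a set of pairwise distinct positive integers $n_1, \dots, n_K$ such that for each pair $i < j$, either $(an_i+b)/(an_j+b) \in R_{a,b,a,d}$ or its reciprocal lies there — equivalently, one of these quotients equals $(an+d)/(an'+d)$ for suitable $n, n'$, i.e.\ the two linear values coincide as a ratio across the two progressions. The natural strategy is to find, for each $K$, a long chain of integers along which the values $an+b$ and $an+d$ interlock multiplicatively. I would first reduce via the footnote's observation to the case $\gcd(a,b,d)=1$, where $a \mid \mathrm{lcm}(b,d)$ becomes $a \mid bd$, and then split into the genuinely new case and degenerate cases (e.g.\ $a=1$, where $R_{1,b,1,d}$ already is easily seen to have infinite clique number since $(n+b)/(n+d)$ ranges over a rich set of rationals).

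The key idea I expect to use is a \emph{multiplicative amplification / scaling} trick: if $\{n_1,\dots,n_K\}$ is a $K$-clique realized by the identity $an_i+b$ vs.\ $an_j+d$ relations, one can try to produce a $(K{+}1)$-clique by multiplying through by a well-chosen integer $\lambda$ and exploiting that $a(\lambda n + \mu) + b$ factors compatibly when $a \mid bd$. Concretely, writing $a \mid bd$ lets us choose a common structure: pick $q$ with $qa = $ (a divisor related to $b$ and $d$), and look at integers $n$ of the form making $an+b$ and $an+d$ both divisible by controlled factors, so that the ratios $(an+b)/(an'+b)$ telescope. I would build the clique inductively: suppose $N_1 < \cdots < N_K$ form a clique; I want a new element, and the cleanest route is to show the set of "good" $n$ (those extending any given partial clique) is nonempty by a counting or explicit-construction argument, leveraging that the conditions "$(aN_i+b)/(an+d)$ or its reciprocal lies in $R_{a,b,a,d}$" translate, after clearing the $a \mid bd$ divisibility, into solvable linear/multiplicative relations in $n$.

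The main obstacle, I anticipate, is making the inductive step actually close: a priori the ratio $(aN_i+b)/(an+d)$ lying in $R_{a,b,a,d}$ requires it to be \emph{exactly} of the form $(am+b)/(am+d)$ for some $m \in \mathbb{N}$, which is a stringent Diophantine condition, not just a density statement. I would handle this by not growing cliques one vertex at a time from an arbitrary clique, but rather by exhibiting, for each $K$, an \emph{explicit parametrized family} — e.g.\ taking $n_i = c_i M + e_i$ along a carefully chosen arithmetic-progression-like scheme with a large modulus $M$, where the divisibility $a \mid bd$ guarantees that each pairwise ratio simplifies to the required Möbius form. The hypothesis $a \mid \mathrm{lcm}(b,d)$ is exactly what is needed so that $a$ divides the relevant cross terms $b \cdot(\text{something}) $ or $d\cdot(\text{something})$ and thus cancels out, leaving a clean rational of the shape $(am+b)/(am+d)$. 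So the real work is (a) guessing the right parametrization of the $K$ vertices, and (b) verifying the arithmetic identity that each of the $\binom{K}{2}$ ratios indeed has a valid preimage $m \in \mathbb{N}$ under $n \mapsto (an+b)/(an+d)$; step (b) is where the $a \mid bd$ condition is consumed, and getting the parametrization in (a) flexible enough to scale with $K$ while keeping all $m$'s positive integers is the crux.
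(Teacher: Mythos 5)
Your submission is a strategy outline rather than a proof, and the gap it leaves open is exactly the heart of the theorem. You correctly identify the crux — that adjacency of two prospective clique vertices $x,y$ is the stringent Diophantine condition $x/y=(am+b)/(am+d)$ for some $m\in\mathbb{N}$, and that one therefore needs an explicit parametrized family of $K$ integers all of whose $\binom{K}{2}$ ratios have this exact form — but you then stop at "guess the right parametrization and verify the identity," which is precisely the step you never carry out. Nothing in the proposal shows how the divisibility $a\mid bd$ actually produces even a single triangle in $G_{a,b,a,d}$, let alone arbitrarily large cliques; the "multiplicative amplification" and "$n_i=c_iM+e_i$ with a large modulus" ideas are not instantiated, and no inductive step is closed. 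There is also a conceptual slip at the outset: a $K$-clique in $G_{a,b,a,d}$ is a set of integers $n_1,\dots,n_K$ with $n_i/n_j\in R_{a,b,a,d}\cup R_{a,d,a,b}$, not a condition on ratios of the form $(an_i+b)/(an_j+b)$ as you write; the clique vertices are arbitrary integers, and only in a successful construction do they turn out to have special shapes. Your side remark that the case $a=1$ is "easily seen" is likewise asserted without an argument (already for $b=1$, $d=0$ producing sets whose pairwise ratios are all of the form $(m+1)/m$ requires a construction).

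For comparison, the paper's proof supplies exactly the two ingredients you leave unspecified. First, after the gcd normalization you mention, a reduction lemma shows $R_{a,b,a,d}\supseteq R_{A,B,A,B-1}$ for suitable $A=B(B-1)$, $B\geq 2$; this uses $\gcd(a,b-d)=1$ to solve $aj+(b-d)k=1$ and the hypothesis $a\mid bd$ to force $a\mid bk(bk-1)$, collapsing the general problem to the single shape $(An+B)/(An+B-1)$. Second, an induction on the clique size $k$ constructs cliques explicitly and \emph{locally}: given that every window of length $H_k$ contains a $k$-clique, one places a $k$-clique at $\{(b-1)r_k+h_1,\dots,(b-1)r_k+h_k\}$ with $r_k=(M_k-1)/b$, $M_k=\prod_{h\leq H_k}(bh+1)$, rescales it to $\{an_\ell+bh_1,\dots,an_\ell+bh_k\}$ with $n_\ell=b(M_k+H_k)!\,\ell+r_k$ (the factorial absorbing the bounded gcds $g_{ij}=h_i-h_j\leq H_k$ so each ratio reduces back to the form $(am+b)/(am+b-1)$), and then verifies via the congruences $r_k\equiv h\pmod{bh+1}$ and the identity $a+b=b^2$ that the extra vertex $an_\ell+(b-1)$ is adjacent to all of them, yielding a $(k+1)$-clique in a bounded window. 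These computations are where $a\mid\mathrm{lcm}(b,d)$ is "consumed," and none of them is anticipated or replaceable by the counting heuristics in your sketch; as it stands, the proposal does not establish the theorem.
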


When $a \mid b$ or $a \mid d,$ Donoso, Le, Moreira and Sun \cite[Proposition 3.6]{MR4594405} proved,  using ergodic methods, that $R_{a,b,c,d}$ is a set of topological multiplicative recurrence, which is equivalent to $\chi(G_{a,b,a,d})=\infty$. Through a purely combinatorial argument, we not only generalize their result to the broader case where \( a \mid \mathrm{lcm}(b,d) \), but also refine it by providing an algorithm for constructing arbitrarily large cliques (see the proof of Proposition \ref{prop:a=b(b-1)}). In other words, given any integer \( k \geq 2 \), one can explicitly find integers \( n_1, \ldots, n_k \geq 1 \) such that \( n_i / n_j \in R_{a,b,a,d} \cup R_{a,d,a,b} \) for all \( 1 \leq i < j \leq k \).

We reduce the general case to the special case $a = b(b-1)$ with $b\geq 2$. 


 \begin{lemma} \label{lem:reduction}
 Let $a \in \mathbb{N}$ and $b, d \in \mathbb{Z}.$  Suppose $b>d, \, \gcd(a,b-d)=1$ and $a \mid bd.$ Then, there exist $A,B\in\N$ with $B\geq 2$ and $A=B(B-1)$ such that $R_{a,b,a,d} \supseteq R_{A, B, A, B-1 }$. In particular, $\omega(G_{a,b,a,d}) \geq \omega(G_{A, B, A, B-1 }).$
 
 Moreover, there exist $C,D\in\N$ such that for any integer $m \geq 1,$ if $n = Cm+D$, then
 \[ \frac{an + b}{an+d} = \frac{Am+B}{Am+(B-1)}. \]
 \end{lemma}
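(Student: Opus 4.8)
The plan is to reverse-engineer the substitution. For a value $t=\frac{an+b}{an+d}$ one solves for $an$ and finds $an+b=\frac{t(b-d)}{t-1}$, while $t=\frac{Am+B}{Am+(B-1)}$ gives $Am+B=\frac{t}{t-1}$; hence, with $A=B(B-1)$, the identity $\frac{an+b}{an+d}=\frac{Am+B}{Am+(B-1)}$ follows from the single affine relation $an+b=(b-d)(Am+B)$ (which then also forces $an+d=(b-d)(Am+(B-1))$, so that the common factor $b-d$ cancels). Writing $n=Cm+D$ and comparing the coefficient of $m$ with the constant term, this relation becomes the pair of conditions
\[ aC=(b-d)B(B-1), \qquad aD=(b-d)B-b. \]
So it suffices to exhibit an integer $B\geq 2$ for which both right-hand sides are divisible by $a$, and large enough that the resulting $C$ and $D$ are positive; one then sets $A:=B(B-1)$, $C:=(b-d)B(B-1)/a$ and $D:=((b-d)B-b)/a$.

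Since $\gcd(a,b-d)=1$, divisibility of the first right-hand side by $a$ is equivalent to $a\mid B(B-1)$, while the congruence $(b-d)B\equiv b\Mod{a}$ has a unique solution $B\equiv b(b-d)^{-1}=:B_0\Mod{a}$. The crux of the argument --- and the only place where the hypothesis $a\mid bd$ is used --- is that this $B_0$ automatically satisfies $a\mid B_0(B_0-1)$: from $B_0-1\equiv d(b-d)^{-1}\Mod{a}$ one gets $B_0(B_0-1)\equiv bd(b-d)^{-2}\equiv 0\Mod{a}$. I would therefore take $B$ to be any member of the residue class of $B_0$ modulo $a$ with $B\geq 2$ and $(b-d)B>b+a$, which is possible because $b-d\geq 1$; then $A,C,D$ as above are positive integers, and a direct substitution confirms $a(Cm+D)+b=(b-d)(Am+B)$ and $a(Cm+D)+d=(b-d)(Am+(B-1))$ for every $m\geq 1$, hence $\frac{a(Cm+D)+b}{a(Cm+D)+d}=\frac{Am+B}{Am+(B-1)}$.

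It remains to deduce the two asserted conclusions. Since every value $\frac{Am+B}{Am+(B-1)}$ exceeds $1$, we have $R_{A,B,A,B-1}=\{\frac{Am+B}{Am+(B-1)}:m\in\mathbb{N}\}$, and because $Cm+D\in\mathbb{N}$ for all $m\geq 1$ the displayed identity gives $R_{A,B,A,B-1}\subseteq R_{a,b,a,d}$; as the corresponding graphs share the vertex set $\mathbb{N}$, this yields $G(R_{A,B,A,B-1})\subseteq G_{a,b,a,d}$ and hence $\omega(G_{a,b,a,d})\geq\omega(G_{A,B,A,B-1})$. The main obstacle is the compatibility observation above; the remaining points --- pushing $B$ far enough out in its residue class, checking positivity of $C$ and $D$, and allowing $b$ or $d$ to be non-positive --- are routine bookkeeping that $b-d\geq 1$ renders harmless.
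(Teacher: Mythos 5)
Your proof is correct and takes essentially the same route as the paper's: both identify the residue class $B \equiv b(b-d)^{-1} \pmod{a}$ (using $\gcd(a,b-d)=1$), observe that $a \mid bd$ then forces $a \mid B(B-1)$, and choose $B$ large enough in that class so that the linear substitution $n = Cm+D$ is valid over $\mathbb{N}$. The paper reaches the same conclusion via an explicit B\'ezout pair $aj+(b-d)k=1$ and a chain of three intermediate inclusions $R_{a,b,a,d}\supseteq R_{a,bk,a,bk-1}\supseteq R_{a,B,a,B-1}\supseteq R_{A,B,A,B-1}$, whereas you solve the single affine identity $an+b=(b-d)(Am+B)$ for $C$ and $D$ in one step, which is a cleaner write-up of the identical underlying mechanism.
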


 \begin{proof}
Since by assumption $\gcd (a,b-d)=1,$ there exist integers $j,k$ such that $bj \leq 0$ and $aj+(b-d)k=1.$ Therefore, for any integer $n \geq 1,$ if  $\frac{an+bk}{an+(bk-1)} >0,$ then we have
 \begin{align}
 \frac{an+bk}{an+(bk-1)} &= \frac{a(b-d)n+(b-d)bk}{a(b-d)n+(b-d)(bk-1)} \nonumber \\
 &=  \frac{a((b-d)n-bj)+b}{a((b-d)n-bj)+d}
 \in R_{a,b,a,d}. \label{eq:lin1}
 \end{align}
 Since $aj+(b-d)k=1$, we have $a\mid (b-d)k-1$, which, multiplying by $bk$, implies that $a\mid bk(bk-1)-bdk^2$. Since $a\mid bd$, it follows that $a\mid bk(bk-1)$, so $R_{a,b,a,d} \supseteq R_{a, bk, a, bk-1 }$.

 Taking $n = m + T$ for some large fixed integer $T\geq 0$, we obtain
 \begin{equation}
    \frac{an+bk}{an+(bk-1)} = \frac{am + (aT+bk)}{am+(aT+bk-1)}. \label{eq:lin2}
 \end{equation}
 Thus, writing $B:=aT+bk$, we have that $R_{a,b,a,d} \supseteq R_{a,B,a,B-1}$. Choose $T$ large enough so that $B\geq 2$. Since $a\mid bk(bk-1)$, we have $a\mid B(B-1)$, which means there exists an integer $J\geq 1$ such that $Ja = B(B-1)$. Taking $m = Jr$ for integers $r \geq 1$ and writing $A := Ja$, we get $A=B(B-1)$ and
 \begin{equation}
     \frac{am+B}{am+(B-1)} = \frac{Ar+B}{Ar+(B-1)}, \label{eq:lin3}
 \end{equation} 
 so $R_{a,B,a,B-1} \supseteq R_{A,B,A,B-1}.$ Furthermore, putting \eqref{eq:lin1}, \eqref{eq:lin2}, and \eqref{eq:lin3} together, it follows that for $n = (b-d)(Jm+T) -bj$, we have
 \[ \frac{an + b}{an+d} = \frac{Am+B}{Am+(B-1)}. \]
Therefore, the lemma follows form taking $C= (b-d)J$ and $D=(b-d)T - bj$.
 \end{proof}

For later application, we prove a stronger version of Theorem \ref{thm:suff} in this special case.

\begin{proposition} \label{prop:a=b(b-1)}
Let $a, b \in \mathbb{N}$ with $a=b(b-1).$ Then
for any integer $k \geq 2,$ there exists an integer $H_k \geq 1$ depending on $a,b$ such that, for each integer $n\geq 0$, the induced subgraph $G_{a,b,a,b-1}[\{n+1, \ldots, n+H_k\}]$ contains a $k$-clique. In particular, we have $\omega(G_{a,b,a,b-1})=\infty.$
\end{proposition}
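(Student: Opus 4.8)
The plan is to build large cliques in $G_{a,b,a,b-1}$ explicitly, exploiting the fact that when $a = b(b-1)$ the Möbius transformation $n \mapsto \frac{an+b}{an+b-1}$ has a very clean multiplicative structure. The key observation is that
\[
\frac{b(b-1)n + b}{b(b-1)n + (b-1)} = \frac{b\,((b-1)n + 1)}{(b-1)\,(bn + 1)},
\]
so if we set $u = (b-1)n+1$ and $v = bn+1$ then the ratio is $\frac{bu}{(b-1)v}$, and $u,v$ satisfy the linear relation $bu - (b-1)v = b - (b-1) = 1$, i.e. $bu = (b-1)v + 1$. More usefully, running $n$ through an arithmetic progression, the integers $an+b$ and $an+(b-1)$ are, up to the fixed factors $b$ and $b-1$, consecutive-ish values, and the point is that two such fractions $\frac{an_i+b}{an_i+(b-1)}$ and $\frac{an_j+b}{an_j+(b-1)}$ have a ratio lying in $R_{a,b,a,b-1} \cup R_{a,b-1,a,b}$ precisely when a certain divisibility/linear condition holds between $n_i$ and $n_j$. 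So I would first work out exactly this condition: for which pairs $m < n$ is $\frac{am+b}{am+(b-1)} \big/ \frac{an+b}{an+(b-1)}$ again of the form $\frac{a\ell + b}{a\ell+(b-1)}$ (or its reciprocal) for some positive integer $\ell$? This should reduce to a linear Diophantine condition on $(m,n)$.

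Next, I would set up the clique construction recursively in $k$. Suppose I have already found $n_1 < \dots < n_{k-1}$ forming a $(k-1)$-clique, all lying in some window $\{N+1, \dots, N+H_{k-1}\}$; I want to append an $n_k$ so that the pair condition holds with every $n_i$. Because each pairwise condition is a congruence/linear constraint on $n_k$ modulo something bounded in terms of $a, b, n_i$, the simultaneous system is solvable by CRT (or simply by choosing $n_k$ in a suitable residue class), and the solution can be taken within a bounded distance of $n_{k-1}$ — this distance is what determines $H_k$ from $H_{k-1}$. The translation-invariance claim (the same bound $H_k$ works for every starting point $n$, i.e. in every window $\{n+1,\dots,n+H_k\}$) should come out automatically because all the governing conditions are invariant under a simultaneous shift $n_i \mapsto n_i + \lambda \cdot (\text{period})$ by the common period of the congruences; alternatively one builds the clique relative to the left endpoint from the start. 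I would present the construction by an explicit choice (e.g. taking the $n_i$ along a geometric-like progression $n_i \approx b^{\,i} \cdot (\text{something})$, since the factor $b/(b-1)$ suggests powers of $b$) rather than an abstract CRT argument, to keep it effective and to match the paper's stated promise of "an algorithm for constructing arbitrarily large cliques."

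The main obstacle I anticipate is the combinatorial bookkeeping in verifying \emph{all} $\binom{k}{2}$ pairwise relations simultaneously with a single window size $H_k$: it is easy to ensure each new vertex is compatible with the previous ones, but one must check that compatibility is not destroyed and that the window does not have to grow uncontrollably or depend on the base point $n$. Concretely, the hard part is choosing the arithmetic progressions / exponents so that for every pair $i<j$ the quantity $\frac{an_i+b}{an_i+(b-1)} \big/ \frac{an_j+b}{an_j+(b-1)}$ genuinely lands in $R_{a,b,a,b-1} \cup R_{a,b-1,a,b}$ — i.e. the resulting $\ell$ is a \emph{positive integer} and not merely a rational — and controlling the size of the largest $n_i$ (hence $H_k$) as a function of $k$. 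Once the pairwise criterion from the first step is in hand, I expect the cleanest route is to take $n_i = \frac{b^i - 1}{b-1} \cdot c + (\text{correction})$ for a suitable common multiplier $c$, check that consecutive ratios telescope nicely through the factor $b/(b-1)$, and then verify the non-consecutive pairs reduce to the consecutive case; the "In particular" conclusion $\omega(G_{a,b,a,b-1}) = \infty$ is then immediate since $k$ was arbitrary.
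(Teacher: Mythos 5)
Your plan has the right flavour (an explicit recursive construction, a pairwise adjacency criterion, telescoping through the factor $b/(b-1)$), but the two steps you defer are exactly the substance of the proposition, and the way you propose to handle them does not work as stated. First, the pairwise criterion is not a pure congruence condition amenable to CRT: for $x>y$, adjacency in $G_{a,b,a,b-1}$ means $x=g(a\ell+b)$, $y=g(a\ell+b-1)$ with $g=\gcd(x,y)=x-y$, i.e.\ one needs the \emph{divisibility} $(x-y)\mid y$ together with $x/(x-y)\equiv b \pmod a$ and $x/(x-y)\geq a+b$. When you try to append a new vertex $x$ to an existing clique $\{y_1,\dots,y_k\}$, the moduli of the constraints are the differences $x-y_i$, which are themselves unknowns depending on $x$; so ``each pairwise condition is a congruence modulo something bounded, hence solvable by CRT in a bounded window'' is not a correct description of the problem, and your own caveat (that the resulting $\ell$ must be a positive integer, not merely a rational) is precisely the point left unresolved. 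Second, the uniformity in the base point $n$ (a $k$-clique in \emph{every} window of length $H_k$) is not a cosmetic add-on: it is what makes the proposition usable in Section~5, and it does not ``come out automatically'' from shift-invariance of congruences, because the divisibility constraints are not translation-invariant; a geometric choice $n_i\approx b^i c$ by itself says nothing about windows based at an arbitrary $n$.

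For comparison, the paper's proof resolves both issues by a rather delicate induction on $k$ carrying the ``every window'' statement as the induction hypothesis: given $H_k$, it sets $M_k=\prod_{1\leq h\leq H_k}(bh+1)$ and $r_k=(M_k-1)/b$, invokes the hypothesis in the specific window starting at $(b-1)r_k$ to get a $k$-clique with offsets $h_1,\dots,h_k\leq H_k$, and then transplants it: for $n_\ell=b(M_k+H_k)!\,\ell+r_k$ the vertices $an_\ell+bh_1,\dots,an_\ell+bh_k$ again form a $k$-clique (the factorial absorbs the gcd's $g_{ij}=h_i-h_j\leq H_k$ so the relevant $\ell$'s stay integral), and the extra vertex $an_\ell+(b-1)$ is adjacent to all of them because $r_k\equiv h\pmod{bh+1}$ for every $h\leq H_k$ and $a+b=b^2$. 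Since these $(k+1)$-cliques recur along an arithmetic progression of common difference $ab(M_k+H_k)!$, one can take $H_{k+1}=2ab(M_k+H_k)!$ and retain uniformity in $n$. Your outline would need to supply mechanisms playing the roles of the product $M_k$, the factorial, and the identity $a+b=b^2$ (or substitutes for them) before it constitutes a proof; as written, it identifies the obstacles but does not overcome them.
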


\begin{proof}
We proceed by induction on $k$. When \( k = 2 \), one can simply take \( H_2 = a + b \), since every interval of length \( H_2 \) contains a pair of consecutive integers of the form \( an + (b - 1) \), \( an + b \).
 Suppose that the statement holds for some integer $k \geq 2.$ Let 
\begin{align*}
M_k:=\prod_{1 \leq h \leq H_k}(bh+1) \quad \text{and} \quad r_k:=\frac{1}{b}(M_k-1).
\end{align*}
Then by the induction hypothesis, there exist integers $1 \leq h_1, \ldots, h_k \leq H_k$ such that the set of vertices $\{(b-1)r_k+h_1, \ldots, (b-1)r_k+h_k\}$ forms a $k$-clique in $G_{a,b,a,b-1}.$ In other words, for any integers $1 \leq i < j \leq k,$ we have
\begin{align*}
\frac{(b-1)r_k+h_i}{(b-1)r_k+h_j}=\frac{am_{ij}+b}{am_{ij}+(b-1)}
\end{align*}
for some integers $m_{ij} \geq 1.$ Since 
\begin{align*}
\frac{am_{ij}+b}{am_{ij}+(b-1)}
\end{align*}
is a reduced fraction, we have
\begin{gather*}
(b-1)r_k+h_i=g_{ij} (am_{ij}+b) \quad \text{and} \quad
(b-1)r_k+h_j= g_{ij}(am_{ij}+(b-1)),
\end{gather*}
where $g_{ij}:=\gcd((b-1)r_k+h_i, (b-1)r_k+h_j)$. It follows that $g_{ij}= h_i-h_j \leq H_{k}$ by considering their difference.
Let
\begin{align*}
n_{\ell} := b (M_k+H_k)! {\ell}+ r_k 
\end{align*}
for integers ${\ell} \geq 1.$ Then by the assumption $a=b(b-1)$, we have
\begin{align}
\frac{an_{\ell}+bh_i}{an_{\ell}+bh_j} &= \frac{an_{\ell}+b(g_{ij} (am_{ij}+b)-(b-1)r_k)}{an_{\ell}+b(g_{ij}(am_{ij}+(b-1))-(b-1)r_k)} \nonumber \\
&=  \frac{a  b (M_k+H_k)! {\ell}+bg_{ij}(am_{ij}+b)}{a  b (M_k+H_k)! {\ell}+bg_{ij} (am_{ij}+(b-1))}.  \label{eq:crazyfrac}
\end{align}
Since $g_{ij} \leq H_k < M_k+H_k,$ we have $g_{ij} \mid (M_k+H_k)!,$ so that the fraction in (\ref{eq:crazyfrac}) can be reduced to
\begin{equation*}
\frac{an_{\ell}+bh_i}{an_{\ell}+bh_j} = \frac{a \left( \frac{(M_k+H_k)!}{g_{ij}}{\ell}+m_{ij} \right)+b}{a \left( \frac{(M_k+H_k)!}{g_{ij}}{\ell}+m_{ij} \right)+(b-1)} \in R_{a,b,a,b-1}.
\end{equation*}
Therefore, the set of vertices $\{ an_{\ell}+bh_1, \ldots, an_{\ell}+bh_k \}$ forms a new $k$-clique in $G_{a,b,a,b-1}$ for each integer ${\ell} \geq 1$. 

Let ${\ell} \geq 1$ be a fixed integer. We claim that $an_{\ell}+(b-1)$ is adjacent to $an_{\ell}+bh_j$ for each integer $1 \leq j \leq k,$ so that they form a $(k+1)$-clique in $G_{a,b,a,b-1}.$ By our choice of $M_k$, $r_k$, we have $br_k+1\equiv 0 \pmod{bh+1}$ for every $0\leq h\leq H_k$, or equivalently $r_k\equiv h\pmod{bh+1}$. Hence, for each $1\leq h\leq H_k$ there exists an integer $m_h \geq 1$ such that 
\begin{align*}
n_{\ell} = b(M_k+H_k)! {\ell} +r_k = (bh+1)m_h +h.
\end{align*}
Since $a+b = b(b-1)+b = b^2$, this implies that
\begin{align}
\frac{an_{\ell}+b(h+1)}{an_{\ell}+(b-1)} &= \frac{a(bh+1)m_h+ (ah+bh+b)}{a(bh+1)m_h+(ah+b-1)} \nonumber \\
&= \frac{am_h+b}{am_h+(b-1)} \in R_{a,b,a,b-1}. \label{eq:red2}
\end{align}
Therefore, the induced subgraph $G_{a,b,a,b-1}[\{an_{\ell}+(b-1), \ldots, an_{\ell}+b(H_k+1) \}]$ contains a $(k+1)$-clique for each integer ${\ell} \geq 1$. Since $r_k+b(H_k+1) \leq ab(M_k+H_k)!, $ we can take $H_{k+1}=2ab(M_k+H_k)!,$ so that
 the induced subgraph $G_{a,b,a,b-1}[\{n, n+1, \ldots, n+H_{k+1}\}]$ contains a $(k+1)$-clique for all integers $n \geq 1$ as desired, and the proposition follows.
\end{proof}

Finally, Theorem \ref{thm:suff} follows readily from the above lemma and proposition.

\begin{proof}[Proof of Theorem \ref{thm:suff}]
 Without loss of generality, we assume that $b>d.$ If $a=1,$ then $\gcd (a,b-d)=1.$ Otherwise, suppose $a \geq 2$ and $bd \neq 0.$
 Let $g:=\gcd(a,b,d)$ and $a=ga', b=gb',d=gd'.$ Then
 \begin{align*}
 \frac{an+b}{an+d}=\frac{a'n+b'}{a'n+d'}
 \end{align*}
 for any integer $n \geq 1,$ so $G_{a,b,a,d}=G_{a',b',a',d'}.$
 Since $a' \mid \operatorname{lcm}(b',d')$ and $\gcd (a',b',d')=1,$ there exists a prime $p \mid a'$ such that $p \mid b'd',$ and either $p \mid b'$ but $p \nmid d',$ or $p \mid d'$ but $p \nmid b',$ so that $\gcd (a',b'-d')=1$ as well. Finally, suppose $a \geq 2$ and $bd = 0.$ Without loss of generality, assume that $d=0.$ Let $h:=\gcd(a,b)$ and $a=ha', b=hb'.$ Then similarly, we have $G_{a,b,a,0}=G_{a',b',a',0}$ and $\gcd (a',b')=1.$ Therefore, it suffices to prove Theorem \ref{thm:suff} for those $a \in \mathbb{N}$ and $b,d \in \mathbb{Z}$ satisfying $b>d, \gcd(a,b-d)=1$ and $a \mid \operatorname{lcm}(b,d).$ Applying Lemma \ref{lem:reduction}, the theorem follows from Proposition \ref{prop:a=b(b-1)}. 
\end{proof}

\section{Proof of Theorem \ref{thm:diophantine}: Necessity} \label{sec:dio_nec}

In this section, we establish the ``only if'' direction of Theorem~\ref{thm:diophantine}. In fact, a stronger statement holds: if either \( a \neq c \), or \( a = c \) but \( a \nmid \mathrm{lcm}(b,d) \), then there exists a completely multiplicative function \( f : \mathbb{N} \to \mathbb{S}^1 \) such that
\[
\liminf_{n \to \infty} \left| f(an + b) - f(cn + d) \right| > 0.
\]
This result was established in \cite[Propositions 2 and 3]{charamaras2024multiplicativerecurrencelinearpatterns}. For completeness, we include a streamlined proof.

Note that $a \nmid \operatorname{lcm}(b,d)$ if and only if $v_p(a)>\max\{v_p(b), v_p(d) \}$ for some prime $p.$ Therefore, it suffices to prove the following lemmas.

 \begin{lemma}
 Let $a,c\in\N$ and $b,d\in\Z$. If $a \neq c,$ then there exists a completely multiplicative function $f:\N\to \mathbb{S}^1$ such that
  \[ \liminf_{n\to\infty} |f(an+b)-f(cn+d)| > 0. \]
 \end{lemma}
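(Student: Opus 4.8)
The plan is to build $f$ as a ``level-set'' of an Archimedean character, exactly parallel to the coloring constructed in Proposition~\ref{prop:1}. Since $a\neq c$, the quotients $\tfrac{an+b}{cn+d}$ avoid neighborhoods of $0$, $1$, and $\infty$; concretely, there exist $1<\alpha<\beta$ such that $\tfrac{an+b}{cn+d}\in(\beta^{-1},\alpha^{-1})\cup(\alpha,\beta)$ for all sufficiently large $n$ (for small $n$ the quotient could land at $1$, but only finitely often, so we may absorb those into the $\liminf$). First I would fix a real parameter $t$ — the natural choice being $t=\tfrac{\pi}{\log\alpha\beta}$ or a similar constant calibrated to the gap — and define the completely multiplicative function by specifying $f(p):=p^{it}$ on primes, so that $f(n)=n^{it}=e^{it\log n}$ for all $n\in\N$. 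This is manifestly completely multiplicative and takes values in $\mathbb{S}^1$.

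Next I would estimate $|f(an+b)-f(cn+d)|=|e^{it\log(an+b)}-e^{it\log(cn+d)}|=2\bigl|\sin\bigl(\tfrac{t}{2}\log\tfrac{an+b}{cn+d}\bigr)\bigr|$. The key point is that for $n$ large, $\tfrac{an+b}{cn+d}$ lies in $(\alpha,\beta)\cup(\beta^{-1},\alpha^{-1})$, so $\bigl|\log\tfrac{an+b}{cn+d}\bigr|$ lies in a fixed compact interval $[\log\alpha,\log\beta]$ bounded away from $0$. Choosing $t$ small enough that $t\log\beta<2\pi$ (and recalling $t\log\alpha>0$), the quantity $\tfrac{t}{2}\log\tfrac{an+b}{cn+d}$ stays in a fixed closed subinterval of $(0,\pi)\cup(-\pi,0)$, on which $|\sin(\cdot)|$ is bounded below by a positive constant $\delta=\delta(\alpha,\beta,t)>0$. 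Hence $\liminf_{n\to\infty}|f(an+b)-f(cn+d)|\geq 2\delta>0$. To make the calibration clean I would just take $t=\tfrac{\pi}{2\log\beta}$ (or any explicit value making $t\log\beta\leq\pi/2$, say), so that the argument of $\sin$ stays in $[\tfrac{t\log\alpha}{2},\tfrac{\pi}{4}]\subset(0,\tfrac{\pi}{2})$ up to sign, where $|\sin|\geq\sin(\tfrac{t\log\alpha}{2})>0$.

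There is essentially no hard step here: the only mild subtlety is the finitely many small $n$ for which the quotient might equal $1$ or be negative (so that it is not in $R_{a,b,c,d}$ in the first place) — these are irrelevant to the $\liminf$, so I would dispose of them in one sentence at the start. The existence of $\alpha,\beta$ is the observation already recorded at the beginning of Section~2 (``if $a\neq c$, then neither $0$, $1$, nor $\infty$ are limit points of $R_{a,b,c,d}$''), which follows because $\tfrac{an+b}{cn+d}\to\tfrac{a}{c}\neq 1$ as $n\to\infty$, so for large $n$ the quotient is trapped between $\tfrac{a}{c}$ and $1$ at bounded multiplicative distance; I would state this explicitly with, say, $\alpha$ and $\beta$ chosen so that $[\alpha,\beta]$ or $[\beta^{-1},\alpha^{-1}]$ contains $\tfrac{a}{c}$ and all but finitely many of the quotients. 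The whole argument is a two- or three-line computation once $t$ is chosen, and it mirrors the proper-coloring proof of Proposition~\ref{prop:1} with the level-set interpretation replaced by the function itself.
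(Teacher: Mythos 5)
Your proposal is correct and takes essentially the same approach as the paper: both use the Archimedean character $f(n) = n^{it}$. The paper's version is a bit slicker — since $\frac{an+b}{cn+d} \to a/c \neq 1$, it simply computes $\lim_{n\to\infty}|f(an+b)-f(cn+d)| = |\exp(it\log(a/c))-1|$ directly and picks any $t$ not in the countable bad set $\{2\pi k/\log(a/c) : k\in\Z\}$, whereas you set up the $(\alpha,\beta)$-interval and a uniform sine lower bound, which is sound but slightly more work than needed here.
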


 \begin{proof}
Consider the Archimedean character $f(n)=n^{it}$ for some $t\neq 2\pi k/\log(a/c)$, where $k \in \mathbb{Z}.$ We have
\begin{align*}
 \lim_{n\to\infty}  |(an+b)^{it} - (cn+d)^{it}| &= \lim_{n\to\infty} \bigg| \bigg(\frac{an+b}{cn+d}\bigg)^{it} -1\bigg| \\
 &=  \lim_{n\to\infty}  \bigg|  \exp \left(it \log \left(\frac{an+b}{cn+d} \right)\right) -1\bigg|\\
&= | \exp(it \log(a/c)) - 1| > 0,
\end{align*} 
and the lemma follows.
 \end{proof}

\begin{lemma} \label{lem:dio_a=c}
Let $a\in\N$ and $b,d\in\Z$. Suppose $v_{p}(a) > \max\{v_p(b),v_p(d)\}$ and $v_p(b) = v_p(d)$ for some prime $p.$ Then there exists a completely multiplicative function $f:\N\to \mathbb{S}^1$ such that
  \[ \liminf_{n\to\infty} |f(an+b)-f(an+d)| > 0. \]
\end{lemma}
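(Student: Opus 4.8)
The goal is to build a completely multiplicative $f:\N\to\mathbb{S}^1$ for which $|f(an+b)-f(an+d)|$ stays bounded away from $0$. The natural candidate, in analogy with the "level-set coloring" of Proposition~\ref{prop:2}, is a Dirichlet-type character supported at the prime $p$. Write $k:=v_p(a)$ and let $\beta:=v_p(b)=v_p(d)<k$; set $b=p^\beta B$, $d=p^\beta D$ with $p\nmid BD$. The idea is to define $f$ on prime powers by fixing a non-principal additive character of $(\mathbb{Z}/p^{k-\beta}\mathbb{Z})^\times$ (or of $(\mathbb{Z}/p^{\ell}\mathbb{Z})^\times$ for a suitable $\ell\le k-\beta$) and pushing it through the factorization $n=p^{v_p(n)}n'$, declaring $f(p)$ to be a value that makes $f$ completely multiplicative (e.g. $f(p)=1$, or any convenient unit-modulus constant, since the $p$-part of $an+b$ and $an+d$ will be pinned down anyway). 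Concretely: extract the common factor $p^\beta$, so $\frac{an+b}{an+d}=\frac{p^{k-\beta}(a/p^k)n+B}{p^{k-\beta}(a/p^k)n+D}$, and observe that both numerator and denominator are $\equiv B$ resp. $D\pmod{p^{k-\beta}}$ with valuation $0$. Hence $f(an+b)$ and $f(an+d)$, after cancelling the (equal) contribution of $p^\beta$ and of $f$ evaluated at the coprime-to-$p$ cofactors reduced mod $p^{k-\beta}$, differ by the fixed ratio $\chi(B)/\chi(D)=\chi(B D^{-1})$ where $\chi$ is the chosen character mod $p^{k-\beta}$.

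\textbf{Key steps, in order.} First, I would record the reduction: dividing through by $\gcd$, one may assume $\beta=0$, i.e. $p\nmid bd$ and $p^k\,\|\,a$ with $k\ge 1$; then $an+b\equiv b\pmod{p^k}$ and $an+d\equiv d\pmod{p^k}$, and both are coprime to $p$. Second, choose a Dirichlet character $\chi$ modulo $p^k$ with $\chi(b)\ne\chi(d)$; such a $\chi$ exists because $b\not\equiv d\pmod{p^k}$ is impossible to rule out in general — wait, this is the crux, see below — so one must instead argue that $bd^{-1}\not\equiv 1\pmod{p}$ can fail, and the right modulus is $p^{v_p(b-d)+1}$ or similar. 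Let me restate: set $m:=v_p(b-d)$; since $v_p(b)=v_p(d)=0<k$ we have $b\equiv d\pmod{p}$ possibly, but $b\not\equiv d\pmod{p^{m+1}}$, and crucially $m<k$ is \emph{not} guaranteed, so one should take the character modulo $p^{\min(k,\,m+1)}$ — and one must check $\min(k,m+1)\ge 1$ and that $b,d$ are distinct modulo it. Third, define $f$: pick a character $\chi$ mod $q:=p^{j}$ (with $j$ as just chosen, $j\ge 1$) such that $\chi(b)\ne\chi(d)$, extend $\chi$ to a completely multiplicative $f:\N\to\mathbb{S}^1$ by $f(p):=1$ and $f(\ell):=\chi(\ell)$ for primes $\ell\ne p$ — here one verifies this is well-defined and completely multiplicative, with $f(n)=\chi(n/p^{v_p(n)})$. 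Fourth, compute: since $an+b$ is coprime to $p$ (as $j\le k$ and $p\nmid b$), $f(an+b)=\chi(an+b)=\chi(b)$, and likewise $f(an+d)=\chi(d)$, so $|f(an+b)-f(an+d)|=|\chi(b)-\chi(d)|>0$ for \emph{all} $n$, giving a $\liminf$ bounded below by this fixed positive constant.

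\textbf{Main obstacle.} The delicate point is the choice of modulus for $\chi$ and the verification that a character separating $b$ from $d$ exists. One needs $b\not\equiv d\pmod{p^j}$ for the chosen $j$, i.e. $j\le v_p(b-d)$; but one also needs $j\le k=v_p(a)$ so that $an+b\equiv b\pmod{p^j}$ and the $p$-adic valuation of $an+b$ is $0$ (ensuring $f(an+b)=\chi(b)$ cleanly). The hypothesis only gives $k>\max\{v_p(b),v_p(d)\}=v_p(b)=v_p(d)$, and says nothing comparing $k$ with $v_p(b-d)$. If $v_p(b-d)\ge k$ this is fine — take $j=k$; wait, no: if $v_p(b-d)\ge k$ then $b\equiv d\pmod{p^k}$ and \emph{no} character mod $p^k$ separates them. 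Resolving this is the real work: one must pass to a larger modulus $p^{\ell}$ with $\ell>k$ \emph{and} account for the fact that $an+b$ is then no longer congruent to $b$ modulo $p^\ell$. The fix is to note $an+b$ ranges over a full residue class structure: as $n$ varies, $an+b\pmod{p^\ell}$ takes $p^{\ell-k}$ values, all $\equiv b\pmod{p^k}$; one then needs $\chi$ mod $p^\ell$ that is \emph{constant} on $\{b+p^k t: t\bmod p^{\ell-k}\}$ but takes different (constant) values on the $b$-coset and the $d$-coset — i.e. $\chi$ should factor through $(\mathbb{Z}/p^k\mathbb{Z})^\times$, bringing us back to the original difficulty. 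The genuine resolution is to abandon Dirichlet characters of modulus a power of $p$ and instead use the \emph{$p$-adic logarithm} / a character of the form $n\mapsto e^{i t\,\lambda_p(n)}$ where $\lambda_p$ measures $n/p^{v_p(n)}$ in $\mathbb{Z}_p^\times$; or, more elementarily, to exploit that $b$ and $d$, having equal $p$-valuation $0$ but $b\ne d$, satisfy $b/d\in\mathbb{Z}_p^\times$ with $b/d\ne 1$, and build $f$ from a locally constant character of $\mathbb{Z}_p^\times$ that is trivial on $1+p^k\mathbb{Z}_p$ (so that it does not "see" the perturbation $an$) yet non-trivial at $b/d$ — which forces $b/d\notin 1+p^k\mathbb{Z}_p$, i.e. $v_p(b-d)<k$ after all. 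I expect the paper handles this by the observation that \emph{under the stated hypotheses $v_p(b)=v_p(d)$ and $v_p(a)>v_p(b)$, one may assume $v_p(b-d)<v_p(a)$ without loss of generality}, or alternatively by choosing the prime $p$ and exponent optimally; pinning down this reduction cleanly is where I would spend the bulk of the effort.
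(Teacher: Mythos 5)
Your instincts are right up to the critical moment, and you correctly identify the obstacle: after reducing to $v_p(b)=v_p(d)=0$ and $p\mid a$, a Dirichlet character $\chi$ modulo $p^{v_p(a)}$ satisfies $\chi(an+b)=\chi(b)$ and $\chi(an+d)=\chi(d)$ for all $n$, but need not separate $b$ from $d$ if $v_p(b-d)\ge v_p(a)$. However, the resolution you guess at — that ``one may assume $v_p(b-d)<v_p(a)$ without loss of generality'' — is simply false. Take $a=p$, $b=1$, $d=1+p^5$: all hypotheses hold, yet $v_p(b-d)=5>1=v_p(a)$, and no reduction (dividing out common factors, shifting, etc.) repairs this, since the hypotheses genuinely allow $v_p(b-d)$ to be arbitrarily large relative to $v_p(a)$. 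So as written your proof has a real gap: the entire case $v_p(b-d)\ge v_p(a)$ is unhandled, and your proposed fallback does not exist.

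The paper closes this gap with a small but decisive algebraic identity that you did not find: it sets $k:=v_p(b-d)$ (not $v_p(a)$), picks $\chi$ modulo $p^{k+1}$ with $\chi(b)\ne\chi(d)$ (possible since $b\not\equiv d\pmod{p^{k+1}}$), and instead of evaluating $\chi$ at $an+b$ and $an+d$ separately — which would require the modulus to divide $p^{v_p(a)}$ — it cross-multiplies: $b(an+d)-d(an+b)=a(b-d)n$ has $p$-adic valuation $\ge v_p(a)+k\ge k+1$, so $b(an+d)\equiv d(an+b)\pmod{p^{k+1}}$. Since $p\nmid bd$ forces $p\nmid(an+b)(an+d)$, applying the modified character $\widetilde\chi$ (which equals $\chi$ on integers coprime to $p$ and is defined by $\widetilde\chi(p)=1$) gives $\chi(b)\,\widetilde\chi(an+d)=\chi(d)\,\widetilde\chi(an+b)$, hence $\widetilde\chi(an+b)/\widetilde\chi(an+d)=\chi(b)/\chi(d)\ne 1$ for every $n$. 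This gets exactly the uniform lower bound $|\chi(b)/\chi(d)-1|>0$ while sidestepping the comparison between $v_p(a)$ and $v_p(b-d)$ entirely: one only needs $p\mid a$, not any bound on $v_p(a)$ relative to the modulus. In short, your plan is salvageable once you replace the direct evaluation by this cross-multiplication; without it, the argument does not go through.
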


\begin{proof}
Since
  \[ |f(an+b)-f(an+d)| = |f((a/p^{v_p(b)})n+(b/p^{v_p(b)}))-f((a/p^{v_p(b)})n+(d/p^{v_p(b)}))|, \]
we assume without loss of generality that $v_p(b)=v_p(d)=0$. Let $k=v_p(b-d).$ Then since $b\not\equiv d\pmod{p^{k+1}}$, there exists a Dirichlet character $\chi$ of modulus $p^{k+1}$ such that $\chi(b)\neq \chi(d)$ by orthogonality. Consider the modified Dirichlet character $\widetilde{\chi}: \mathbb{N} \to \mathbb{S}^1$ defined on primes by
    \[ \widetilde{\chi}(q) = \begin{cases}
        \chi(q) &\text{if }q\neq p, \\
      \hfil    1 &\text{if } q=p.
    \end{cases} \]
    For any integer $n \geq 1$, we have $b(an+d) - d(an+b) = a(b-d)n$. Since $p\mid a$ and $p^{k}\mid b-d$, we have $b(an+d) \equiv d(an+b) \Mod{p^{k+1}}$. Using the assumption $p\nmid b d$, we obtain
    \[ \widetilde{\chi}(b(an+d)) = \chi(b(an+d)) = \chi(d(an+b)) = \widetilde{\chi}(d(an+b)), \]
Therefore, we conclude that
    \begin{align*}
      |\widetilde{\chi}(an+b)-\widetilde{\chi}(an+d)| &= \bigg| \frac{\widetilde{\chi}(an+b)}{\widetilde{\chi}(an+d)} -1 \bigg| \\
      &= \bigg| \frac{\chi(b)}{\chi(d)} -1 \bigg| >0,
    \end{align*}
 and the lemma follows.   
\end{proof}



\begin{lemma}
Let $a\in\N$ and $b,d\in\Z$. Suppose $v_{p}(a) > \max\{v_p(b),v_p(d)\}$ and $v_p(b) \neq v_p(d)$ for some prime $p.$ Then there exists a completely multiplicative function $f:\N\to \mathbb{S}^1$ such that
  \[ \liminf_{n\to\infty} |f(an+b)-f(an+d)| > 0. \]
\end{lemma}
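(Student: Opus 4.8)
We want a completely multiplicative $f : \N \to \mathbb{S}^1$ with $\liminf_n |f(an+b) - f(an+d)| > 0$, under the hypothesis that some prime $p$ satisfies $v_p(a) > \max\{v_p(b), v_p(d)\}$ and $v_p(b) \neq v_p(d)$. The plan is to mimic the previous lemma, but instead of detecting a residue-class obstruction via a Dirichlet character, we detect a $p$-adic \emph{valuation} obstruction via the completely multiplicative function $n \mapsto \mu^{v_p(n)}$ for a suitable root of unity $\mu$. Concretely, set $k := |v_p(b) - v_p(d)| \geq 1$, let $\mu := e^{2\pi i/(k+1)}$ (any primitive root of unity of order not dividing $k$ would do, e.g. order $k+1$), and define $f$ on primes by $f(p) := \mu$ and $f(q) := 1$ for $q \neq p$; then $f$ is completely multiplicative and $f(n) = \mu^{v_p(n)}$ for all $n$.

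The key computation, exactly as in the proof of Theorem~\ref{thm:nec}(iii), is that $v_p(an+b)$ and $v_p(an+d)$ differ by exactly $k$ for every $n$. Indeed, writing $\beta := v_p(b)$, $\delta := v_p(d)$ and assuming WLOG $\beta > \delta$ (so $k = \beta - \delta$), the hypothesis $v_p(a) > \max\{\beta,\delta\} \geq \delta$ gives $v_p(an) = v_p(a) + v_p(n) > \delta$ together with $v_p(b) = \beta > \delta = v_p(d)$; hence $v_p(an+d) = \delta$ (the strictly smallest among the relevant valuations forces equality in the ultrametric inequality), while $v_p(an+b) \geq \delta + 1$. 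Thus $v_p(an+b) - v_p(an+d) \geq 1$, and more precisely one checks $v_p(an+b) - v_p(an+d) = v_p\!\left(\frac{an+b}{an+d}\right) = |v_p(b) - v_p(d)| = k$ — this is precisely the content of the displayed valuation identity in the proof of Theorem~\ref{thm:nec}, applied with $a = c$. (Here one uses $v_p(an+b) = v_p(p^{\beta - \delta} \cdot (\text{unit-ish}))$ after factoring out $p^\delta$, exactly as in that proof.)

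Consequently
\[
\left| f(an+b) - f(an+d) \right| = \left| \frac{f(an+b)}{f(an+d)} - 1 \right| = \left| \mu^{\,v_p(an+b) - v_p(an+d)} - 1 \right| = \left| \mu^{k} - 1 \right|,
\]
which is a fixed positive constant since $\mu^k = e^{2\pi i k/(k+1)} \neq 1$. Taking $\liminf$ over $n$ gives the claim. There is essentially no obstacle here: the only point requiring care is the valuation identity $v_p(an+b) - v_p(an+d) = k$, and that is already established (as an inclusion $R_{a,b,a,d} \subseteq T'_{p,k}$) in the proof of Theorem~\ref{thm:nec}(iii), so one may simply invoke it. Writing it out:

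\begin{proof}
Let $k := |v_p(b) - v_p(d)| \geq 1$ and put $\mu := e^{2\pi i/(k+1)}$, a primitive $(k+1)$-th root of unity. Define the completely multiplicative function $f : \N \to \mathbb{S}^1$ by $f(q) := \mu$ if $q = p$ and $f(q) := 1$ for every prime $q \neq p$; equivalently, $f(n) = \mu^{v_p(n)}$ for all $n \in \N$. By the computation in the proof of Theorem~\ref{thm:nec} (the case $\beta \neq \delta$ there, applied with $c = a$), for every integer $n \geq 1$ we have
\[
v_p(an + b) - v_p(an + d) = v_p\!\left( \frac{an+b}{an+d} \right) = |v_p(b) - v_p(d)| = k.
\]
Hence, for every $n \geq 1$,
\[
\left| f(an+b) - f(an+d) \right| = \left| \frac{f(an+b)}{f(an+d)} - 1 \right| = \left| \mu^{\,k} - 1 \right| > 0,
\]
since $\mu^{k} = e^{2\pi i k / (k+1)} \neq 1$. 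Taking the liminf as $n \to \infty$ yields $\liminf_{n\to\infty} |f(an+b) - f(an+d)| = |\mu^{k} - 1| > 0$, as desired.
\end{proof}
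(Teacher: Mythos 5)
Your proof is correct and takes essentially the same approach as the paper's: both define a completely multiplicative $f$ supported (non-trivially) only at the prime $p$, assigning it a root of unity whose $k$-th power is not $1$, and then exploit the fact that $|v_p(an+b)-v_p(an+d)|=k$ for all $n$. The paper picks $f(p)=e^{\pi i/k}$, which makes the distance exactly $2$; you pick $f(p)=e^{2\pi i/(k+1)}$, which is equally valid. One tiny nit: your displayed identity $v_p(an+b)-v_p(an+d)=|v_p(b)-v_p(d)|$ holds only up to sign (it equals $\pm k$ depending on whether $v_p(b)>v_p(d)$ or not), but since $|\mu^{-k}-1|=|\mu^{k}-1|$ this does not affect the conclusion.
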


 \begin{proof}
 Let $k=|v_p(b)-v_p(d)|.$  Consider the completely multiplicative function \( f : \mathbb{N} \to \mathbb{S}^1 \) defined on primes by
 \begin{align*}
 f(q)=
\begin{cases}
 e^{\pi i /k} &\text{if $q=p,$} \\
 \hfil 1 &\text{if $q \neq p.$}
\end{cases}
 \end{align*}
 Since by assumption $|v_p \left( \frac{an+b}{an+d}\right)|=k$ for any integer $n \geq 1,$
we have
\begin{align*}
 |f(an+b)-f(an+d)| = \bigg|\frac{f(an+b)}{f(an+d)} -1\bigg| = 2, 
\end{align*}
and the lemma follows.
 \end{proof}

\section{Proof of Theorem \ref{thm:diophantine}: Sufficiency} \label{section:thm2suff}

In this section, we establish the ``if" direction of Theorem \ref{thm:diophantine}. Let \( a \in \mathbb{N} \) and \( b, d \in \mathbb{Z} \) with \( a \mid \mathrm{lcm}(b,d) \). Then by Lemma~\ref{lem:reduction}, there exist \( A, B \in \mathbb{N} \) such that \( B \geq 2 \) and \( A = B(B-1) \), as well as \( C, D \in \mathbb{N} \) such that if \( n = C m + D \), then
\[
\frac{a n + b}{a n + d} = \frac{A m + B}{A m + (B - 1)}
\]
for any integer \( m \geq 1 \). In particular, for any unimodular completely multiplicative function $f$, we have 
\begin{align*}
|f(an+b)-f(an+d)| = |f(Am+B)-f(Am+(B-1))|,
\end{align*}
so that
 \[ C\cdot \mathcal{A}_{A,B,A,B-1}(f_1,\ldots,f_r;\eps) + D \subseteq \mathcal{A}_{a,b,a,d}(f_1,\ldots,f_r;\eps) \]
 for any unimodular completely multiplicative functions $f_1,\ldots,f_r.$
 Therefore, it suffices to establish positive lower asymptotic density of the set $\mathcal{A}_{A,B,A,B-1}(f_1,\ldots,f_r;\eps)$.

\begin{proposition}
Let \( r, a, b \in \mathbb{N} \) with \( a = b(b - 1) \) and \( b \geq 2 \). Then for any \( \varepsilon > 0 \) and any completely multiplicative functions \( f_1, \ldots, f_r : \mathbb{N} \to \mathbb{S}^1 \), the set \( \mathcal{A}_{a,b,a,b-1}(f_1, \ldots, f_r; \varepsilon) \) has positive lower asymptotic density.

\end{proposition}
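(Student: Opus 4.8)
The plan is to combine the clique construction of Proposition~\ref{prop:a=b(b-1)} with a pigeonhole argument on the torus $(\mathbb{S}^1)^r$ and an elementary counting argument, extracting not merely one element of $\mathcal{A}_{a,b,a,b-1}(f_1,\ldots,f_r;\varepsilon)$ but enough of them to force positive lower density. First I would fix $N=N(r,\varepsilon)$, the least number of sets of diameter less than $\varepsilon$ needed to cover $(\mathbb{S}^1)^r$, and set $k:=N+1\geq 2$; let $H:=H_k$ be the corresponding constant from Proposition~\ref{prop:a=b(b-1)}, so that every block of $H$ consecutive positive integers contains a $k$-clique of $G_{a,b,a,b-1}$.

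The pigeonhole step: in any $k$-clique $\{n_1,\ldots,n_k\}$ of $G_{a,b,a,b-1}$, the $k>N$ points $\big(f_1(n_i),\ldots,f_r(n_i)\big)\in(\mathbb{S}^1)^r$ cannot lie in pairwise distinct sets of a fixed cover by $N$ sets of diameter $<\varepsilon$, so there exist $i_0\neq j_0$ with $|f_j(n_{i_0})-f_j(n_{j_0})|<\varepsilon$ for all $1\leq j\leq r$. Since $\{n_{i_0},n_{j_0}\}$ is an edge, after possibly interchanging $i_0$ and $j_0$ we may write $n_{i_0}/n_{j_0}=(am+b)/(am+b-1)$ for a unique integer $m\geq 1$; as $am+b$ and $am+b-1$ are coprime, $g:=\gcd(n_{i_0},n_{j_0})$ satisfies $n_{i_0}=g(am+b)$ and $n_{j_0}=g(am+b-1)$. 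By complete multiplicativity $f_j(n_{i_0})/f_j(n_{j_0})=f_j(am+b)/f_j(am+b-1)$, and since $|f_j|\equiv 1$ this yields $|f_j(am+b)-f_j(am+b-1)|=|f_j(n_{i_0})-f_j(n_{j_0})|<\varepsilon$ for all $j$; hence $m\in\mathcal{A}_{a,b,a,b-1}(f_1,\ldots,f_r;\varepsilon)$, and moreover $g=|n_{i_0}-n_{j_0}|$.

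The counting step: for large $X$, partition $\{1,\ldots,X\}$ into $\lfloor X/H\rfloor$ consecutive blocks of length $H$. Each block contains a $k$-clique, hence produces (via the previous paragraph) some $m\in\mathcal{A}_{a,b,a,b-1}(f_1,\ldots,f_r;\varepsilon)$ with $am+b\leq X$ and with $g\leq H$, since both $g(am+b)$ and $g(am+b-1)$ lie inside that block. Conversely, for a fixed value of $m$ and a fixed $g\in\{1,\ldots,H\}$ the two integers $g(am+b),g(am+b-1)$ are determined and lie together in at most one of the blocks, so each $m$ is produced by at most $H$ of them. Therefore $|\mathcal{A}_{a,b,a,b-1}(f_1,\ldots,f_r;\varepsilon)\cap\{1,\ldots,X\}|\geq \lfloor X/H\rfloor/H$, and letting $X\to\infty$ gives $\underline{d}\geq 1/H^2>0$. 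Since $H=H_k$ and $N$ are effectively computable in terms of $\varepsilon,r,a,b$, so is the resulting density bound, as claimed in the footnote of Theorem~\ref{thm:diophantine}.

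The main obstacle is exactly the bookkeeping in the counting step: a priori the (many) $k$-cliques supplied by Proposition~\ref{prop:a=b(b-1)} could all feed into a bounded pool of values $m$ with unbounded multiplicity, which would be useless for density. What prevents this is that the pigeonholed pair $\{n_{i_0},n_{j_0}\}$ is confined to a single block of length $H$, which simultaneously bounds $g=\gcd(n_{i_0},n_{j_0})$ by $H$ and bounds by $H$ the number of blocks capable of producing a given $m$. The remaining ingredients — the pigeonhole on $(\mathbb{S}^1)^r$ and the cancellation $f_j(n_{i_0})/f_j(n_{j_0})=f_j(am+b)/f_j(am+b-1)$ from complete multiplicativity — are routine.
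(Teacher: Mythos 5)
Your proof is correct and takes essentially the same route as the paper: apply Proposition~\ref{prop:a=b(b-1)} to place a clique in every block of length $H$, pigeonhole the clique vertices in $(\mathbb{S}^1)^r$ to extract a same-color (close) adjacent pair, and then use $\gcd(n_{i_0},n_{j_0})=|n_{i_0}-n_{j_0}|\leq H$ to control multiplicity of the resulting $m$. The only cosmetic differences are that you use an abstract cover of $(\mathbb{S}^1)^r$ where the paper uses an explicit arc-based $k^r$-coloring, and you bound the multiplicity of each $m$ directly by $H$ where the paper instead pigeonholes to extract a subsequence of blocks sharing the same gcd value; both yield $\underline{d}\geq 1/H^2$.
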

 \begin{proof}
     Fix $0<\varepsilon<1/3$, let $k\geq 1$ be the smallest integer for which $|e^{2\pi i/k}-1| < \varepsilon$. Write $f_j(n) = e^{i \arg f_j(n)}$ for $1 \leq j \leq r.$ Consider the $k^r$-coloring $c:\mathbb{N} \to \{1, \ldots,k \}^r$ given by
     \begin{align*}
 c(n) = (\ell_1, \ldots, \ell_r) \quad\text{if } \, \frac{1}{2\pi}\arg f_j(n) \Mod{1}  \in \bigg[\frac{\ell_j-1}{k}, \frac{\ell_j}{k} \bigg) \quad \forall \, 1\leq j \leq r.
     \end{align*}
Write $I_t=\{ (t-1)H_{k^r+1}+1, \ldots, tH_{k^r+1} \}$ for integers $t \geq 1$, where $H_{k^{r}+1}$ is as in Proposition \ref{prop:a=b(b-1)}. Applying Proposition \ref{prop:a=b(b-1)}, the induced subgraph 
    $G_{a,b,a,b-1}[I_t]$ contains a $(k^r+1)$-clique for each integer $t \geq 1.$ Since $c$ is a $k^{r}$-coloring, at least two vertices of these cliques must share the same color by the pigeonhole principle. That is, there exist integers $x_t, y_t \in I_t$ that are adjacent as vertices such that $c(x_t)=c(y_t)$. This implies
    \begin{align} \label{eq:xtyt}
   \max_{1 \leq j \leq r}  \bigg| \frac{f_j(x_t)}{f_j(y_t)} -1 \bigg| < | e^{2\pi i/k} -1| < \varepsilon. 
    \end{align}
  Since $x_t, y_t$ are adjacent as vertices, there exists an integer $n_t \geq 1$ such that
  \begin{align} \label{eq:ntnt}
   \frac{x_t}{y_t} = \frac{an_t + b}{an_t+(b-1)}. 
  \end{align}
Let $g_t:=\operatorname{gcd}(x_t, y_t).$ Then, since
\begin{align} \label{eq:gta}
 x_t = g_t(an_t+b)  \quad \text{and} \quad y_t = g_t(an_t + (b-1)),
\end{align} 
 we have $g_t=x_t-y_t \leq H_{k^r+1}$. Therefore, by the pigeonhole principle, for each integer $J \geq 1,$  there exist $1 \leq g \leq H_{k^r+1}$ and $1 \leq t_1, \ldots, t_J \leq (J-1)H_{k^r+1}+1$ such that $g_{t_i}=g$ for $1 \leq i \leq J.$
Since \( x_{t_i}, y_{t_i} \in I_{t_i} \), it follows from \eqref{eq:gta} that the integers \( n_{t_i} \) are pairwise distinct for \( 1 \leq i \leq J \). Finally, since
 $1 \leq n_{t_1},\ldots, n_{t_J} \leq JH_{k^r+1}^2,$
it follows from (\ref{eq:xtyt}) and (\ref{eq:ntnt}) that 
\begin{align*}
|\mathcal{A}_{a,b,a,b-1}(f_1, \ldots, f_r;\varepsilon) \cap \{1,\ldots,JH_{k^r+1}^2\}| \geq J. 
\end{align*}
Therefore, we have
     \begin{align*}
       \underline{d}(\mathcal{A}_{a,b,a,b-1}(f_1, \ldots, f_r;\varepsilon)) =&  \liminf_{n\to\infty} \frac{|\mathcal{A}_{a,b,a,b-1}(f_1, \ldots, f_r;\varepsilon)\cap\{1,\ldots,n\}|}{n} \\
       \geq & \liminf_{n\to\infty} \frac{\lfloor n/ H_{k^r+1}^2\rfloor}{n} \\
         = & \frac{1}{H_{k^r+1}^2},
     \end{align*}
    and the proposition follows.
\end{proof}




\section*{Acknowledgements}
The authors are grateful to Andrew Granville for his advice and encouragement. They would like to thank Oleksiy Klurman, Cihan Sabuncu, and Stelios Sachpazis for helpful discussions. They also thank Dimitrios Charamaras, Nikos Frantzikinakis, Andreas Mountakis, and Konstantinos Tsinas for carefully reading an earlier version of the manuscript. In particular, they are thankful to Joel Moreira for pointing out that the existence of arbitrarily large cliques yields not only topological but also measurable multiplicative recurrence, thereby significantly strengthening their earlier results.

\printbibliography


\end{document}